\colorlet{darkblue}{blue!50!black}
\newcommand{\esssup}{\mathop{\rm ess\ sup}}
\newcommand{\p}{\partial}
\newcommand{\e}{\varepsilon}
\newcommand{\R}{{\mathbb R}}
\newcommand{\Z}{{\mathbb Z}}
\newcommand{\BB}{{\cal B}}
\newcommand{\FF}{{\cal F}}
\newcommand{\KK}{{\cal K}}
\theoremstyle{plain}
\newtheorem{theorem}{Theorem}[section]
\newtheorem{lemma}[theorem]{Lemma}
\newtheorem{proposition}[theorem]{Proposition}
\theoremstyle{definition}
\newtheorem{definition}[theorem]{Definition}
\theoremstyle{remark}
\newtheorem{remark}[theorem]{Remark}
\begin{document}
\author{Armen Shirikyan\footnote{Department of Mathematics, University of Cergy--Pontoise, CNRS UMR 8088, 2 avenue Adolphe Chauvin, 95302 Cergy--Pontoise, France; e-mail: \href{mailto:Armen.Shirikyan@u-cergy.fr}{Armen.Shirikyan@u-cergy.fr}}}
\title{Approximate controllability of the viscous Burgers equation on the real line}
\date{}
\maketitle

\begin{abstract}
The paper is devoted to studying the 1D viscous Burgers equation controlled by an external force. It is assumed that the initial state is essentially bounded, with no decay condition at infinity, and the control is a trigonometric polynomial of low degree with respect to the space variable. We construct explicitly a control space of dimension~$11$ that enables one to steer the system to any neighbourhood  of a given final state in local topologies. The proof of this result is based on an adaptation of the Agrachev--Sarychev approach to the case of an unbounded domain.
 
\smallskip
\noindent
{\bf AMS subject classifications:} 35K91, 35Q93, 93C20

\smallskip
\noindent
{\bf Keywords:} 
Burgers equation, approximate controllability, Agrachev-Sarychev approach
\end{abstract}

\section{Introduction}
\label{s1}
Let us consider the following viscous Burgers equation on the real line:
\begin{equation} \label{1.1}
\p_t u-\mu\p_x^2u+u\p_xu=f(t,x), \quad x\in\R. 
\end{equation}
Here $u=u(t,x)$ is an unknown function, $\mu>0$ is a viscosity coefficient, and~$f(t,x)$ is an external force which is assumed to be essentially bounded in~$x$ and integrable in~$t$. Equation~\eqref{1.1} is supplemented with the initial condition
\begin{equation} \label{1.2}
u(0,x)=u_0(x),
\end{equation}
where $u_0\in L^\infty(\R)$. Due to the maximum principle, one can easily prove the existence and uniqueness of a solution for~\eqref{1.1}, \eqref{1.2} in appropriate functional classes. Our aim is to study controllability properties of~\eqref{1.1}. Namely, we assume that~$f$ has the form
\begin{equation} \label{1.3}
f(t,x)=h(t,x)+\eta(t,x),
\end{equation}
where $h$ is a fixed regular function and~$\eta$ is a control, which is assumed to be a smooth function in time with range in a finite-dimensional subspace $E\subset L^\infty(\R)$. We shall say that~\eqref{1.1} is {\it approximately controllable\/} at a time~$T>0$ if for any initial state $u_0\in L^\infty(\R)$, any target $\hat u\in C(\R)$,  and any numbers $\e,r>0$ there is a smooth function $\eta:[0,T]\to E$ such that the solution~$u(t,x)$ of problem~\eqref{1.1}--\eqref{1.3} satisfies the inequalities
\begin{equation} \label{1.4}
\bigl\|u(T,\cdot)\bigr\|_{L^\infty(\R)}\le K, \quad
\bigl\|u(T,\cdot)-\hat u\bigr\|_{L^\infty([-r,r])}<\e,
\end{equation}
where $K>0$ does not depend on~$r$ and~$\e$. 
Given a finite subset $\Lambda\subset\R$, we denote by $E_\Lambda$ the vector space spanned by the functions~$\cos(\lambda x)$ and~$\sin(\lambda x)$ with $\lambda\in\Lambda$. The following theorem is a weaker version of the main result of this paper. 

\medskip
\noindent
{\bf  Main Theorem.}
{\it Let $\Lambda=\{0,\lambda_1,\lambda_2,2\lambda_1,2\lambda_2,\lambda_1+\lambda_2\}$, where~$\lambda_1$ and~$\lambda_2$ are incommensurable positive numbers, and let $E=E_\Lambda$. Then Eq.~\eqref{1.1} is approximately controllable at any time~$T>0$.}

\medskip
We refer the reader to Section~\ref{s2} for a stronger result on approximate controllability and for an outline of its proof, which is based on an adaptation of a general approach introduced by Agrachev and Sarychev in~\cite{AS-2005} and further developed in~\cite{AS-2008}; see also~\cite{shirikyan-cmp2006,nersisyan-2011,sarychev-2012} for some other extensions. Let us note that the Agrachev--Sarychev approach enables one to establish a much stronger property: given any initial and target states and any non-degenerate finite-dimensional functional, one can construct a control that steers the system to the given neighbourhood of the target  so that the values of the functional on the solution and on the target coincide. However, to make the presentation simpler and shorter, we confine ourselves to the approximate controllability. The above-mentioned property of controllability will be analysed in~\cite{shirikyan-2014} in the more difficult case of the 2D Navier--Stokes system.  

The main theorem stated above proves the approximate controllability of the Burgers equation by a control whose Fourier transform is localised at~$11$ points. This result is in sharp contrast with the case of a control localised in the physical space, for which the approximate controllability does not hold even for the problem in a bounded interval. This fact was established by Fursikov and Imanuvilov; see Section~I.6 of the book~\cite{FI1996}. Other negative results on controllability of the Burgers equation via boundary were obtained by Diaz~\cite{diaz-1996} and Guerrero and Imanuvilov~\cite{GI-2007}. On the other hand, Coron showed in~\cite{coron-2007} that any initial state can be driven to zero by a boundary control and Fern\'andez-Cara and Guerrero~\cite{FG-2007} proved the exact controllability (with an estimate for the minimal time of control) for the problem with distributed control. Furthermore, Glass and Guerrero~\cite{GG-2007} established global controllability to non-zero constant states via boundary for small values of the viscosity and Chapouly~\cite{chapouly-2009} proved the global exact controllability to a given solution by two boundary and one distributed controls. Imanuvilov and Puel~\cite{IP-2009}  proved the global boundary controllability of the 2D Burgers equation in a bounded domain under some geometric conditions. We refer the reader to the book~\cite{coron2007} for a discussion of the methods used in the control theory for the Burgers equation on a bounded interval. To the best of our knowledge, the problem of controllability of the viscous Burgers equation was not studied in the case of an unbounded domain. 

\smallskip
The paper is organised as follows. In Section~\ref{s2}, we formulate the main result and outline the scheme of its proof. Section~\ref{s3} collects some facts about the Cauchy problem for Eq.~\eqref{1.1} without decay condition at infinity. The proof of the main result of the paper is given in Section~\ref{s4}. 

\subsection*{Acknowledgments} 
This research was carried out within the MME-DII Center of Excellence (ANR-11-LABX-0023-01) and supported by the ANR grant STOSYMAP (ANR 2011 BS01 015 01).

\subsection*{Notation}  
Let $J\subset\R$ be a bounded closed interval, let $D\subset\R^n$ be an open subset, and let~$X$ be a Banach space. We denote by~$B_X(R)$ the closed ball in~$X$ of radius~$R$ centred at zero. We shall use the following functional spaces.

\medskip
\noindent
For $p\in[1,\infty]$, we denote by $L^p(J,X)$ the space of measurable functions $f:J\to X$ such that
$$
\|f\|_{L^p(J,X)}:=\biggl(\int_J\|f(t)\|_X^p\biggr)^{1/p}<\infty.
$$
In the case $p=\infty$, this norm should be replaced by $\esssup_{t\in J}\|f(t)\|_X$. 

\smallskip
\noindent
For an integer $k\in[0,+\infty]$, we write $C^k(J,X)$ for the space of~$k$ times continuously differentiable functions on~$J$ with range in~$X$ and endow it with  natural norm. In the case $k=0$, we omit the corresponding superscript. 

\smallskip
\noindent
For an integer $s\ge0$, we denote by~$H^s(D)$ the Sobolev space on~$D$ of order~$s$ with the standard norm~$\|\cdot\|_s$. In the case $s=0$, we write~$L^2(D)$ and~$\|\cdot\|$. 

\smallskip
\noindent
$L^\infty=L^\infty(\R)$ is the space of bounded measurable functions $f:\R\to\R$ with the natural norm~$\|f\|_{L^\infty}$. The space $L^\infty(D)$ is defined in a similar way.

\smallskip
\noindent
$W^{k,\infty}(\R)$ is the space of  functions $f\in L^\infty$ such that $\p_x^jf\in L^\infty$ for $0\le j\le k$. 

\smallskip
\noindent
$C_b^\infty=C_b^\infty(\R)$ stands for the space of infinitely differentiable functions $f:\R\to\R$ that are bounded together with all their derivatives. 

\smallskip
\noindent
$H_{\rm ul}^s=H_{\rm ul}^s(\R)$ is the space of functions $f:\R\to\R$ whose restriction to any bounded interval~$I\subset\R$ belongs~$H^s(I)$ such that
$$
\|f\|_{H_{\rm ul}^s}:=\sup_{x\in\R}\|f(x+\cdot)\|_{H^s([0,1])}<\infty. 
$$

\noindent
If $J=[a,b]$ and $X=H_{\rm ul}^s\mbox{ or }H_{\rm ul}^s\cap L^\infty$, then $C_*(J,X)$ stands for the space of functions $f:J\to X$ that are bounded and continuous on the interval~$(a,b]$ and possess a limit in the space~$H_{\rm loc}^s$ as $t\to a^+$. 

\smallskip
\noindent
We denote by~$C_i$ unessential positive constants. 

\section{Main result and scheme of its proof}
\label{s2}

We begin with the definition of the property of approximate controllability. As it will be proved in Section~\ref{s3}, the Cauchy problem~\eqref{1.1}, \eqref{1.2} is well posed. In particular, for any $T>0$, any integer $s\ge0$, and any functions $u_0\in L^\infty(\R)$ and $f\in L^1(J_T,H_{\rm ul}^s\cap L^\infty)$, there is a unique solution $u\in C_*(J_T,H_{\rm ul}^s\cap L^\infty)$ for~\eqref{1.1}, \eqref{1.2}. 

\begin{definition}
Let $T>0$, let $h\in L^1(J_T,H_{\rm ul}^s)$ for any $s\ge0$, and let $E\subset C_b^\infty$ be a finite-dimensional subspace. We shall say that problem~\eqref{1.1}, \eqref{1.3} is {\it approximately controllable at time~$T$ by an $E$-valued control\/} if for any integer $s\ge0$, any numbers $\e, r>0$, and any functions $u_0\in L^\infty$ and $\hat u\in H_{\rm ul}^s$ there is $\eta\in C^\infty(J_T,E)$ such that the solution $u(t,x)$ of~\eqref{1.1}--\eqref{1.3} satisfies the inequalities\footnote{Recall that the norm on the intersection of two Banach spaces is defined as the sum of the norms.}
\begin{equation} \label{2.1}
\|u(T,\cdot)\|_{H_{\rm ul}^s\cap L^\infty}\le K_s, \quad 
\|u(T,\cdot)-\hat u\|_{H^s([-r,r])}<\e,
\end{equation}
where $K_s>0$ is a constant depending only on~$\|u_0\|_{L^\infty}$, $\|\hat u\|_{H_{\rm ul}^s}$, $T$, and~$s$ (but not on~$r$ and~$\e$). 
\end{definition}

Recall that, given a finite subset $\Lambda\subset\R$, we denote by $E_\Lambda\subset C_b^\infty$ the vector span of the functions~$\cos(\lambda x)$ and~$\sin(\lambda x)$ with $\lambda\in\Lambda$. The following theorem is the main result of this paper. 

\begin{theorem} \label{t2.1}
Let $T>0$, $h\in L^2(J_T,H_{\rm ul}^s)$ for any $s\ge0$, let~$\lambda_1$ and~$\lambda_2$ be incommensurable positive numbers, and let 
$\Lambda=\{0,\lambda_1,\lambda_2,2\lambda_1,2\lambda_2,\lambda_1+\lambda_2\}$. Then problem~\eqref{1.1}, \eqref{1.3} is approximately controllable at time~$T$ by an $E_\Lambda$-valued control. 
\end{theorem}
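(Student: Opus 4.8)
The plan is to adapt the Agrachev--Sarychev method to the real line, the essential new point being that the weak-compactness arguments available on a bounded domain or a torus must be replaced throughout by uniform-in-$x$ estimates in the spaces $H_{\rm ul}^s\cap L^\infty$ furnished by Section~\ref{s3}. Writing the nonlinearity through the symmetric bilinear map $B(w_1,w_2)=\tfrac12\p_x(w_1w_2)$, so that $u\p_xu=B(u,u)$, I introduce the \emph{extension} of a finite-dimensional subspace $E\subset C_b^\infty$,
\[
\FF(E)=E+\linspan\bigl\{B(\eta,\eta):\eta\in E\bigr\},
\]
and call $E$ \emph{saturating} if $\bigcup_{k\ge0}\FF^k(E)$ is dense in every $H_{\rm ul}^s$ for the local topology (that of convergence in $H^s([-r,r])$ for each~$r$). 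The theorem will then follow from three assertions: (i) a \emph{relaxation principle} stating that if problem~\eqref{1.1}, \eqref{1.3} is approximately controllable by an $\FF(E)$-valued control, then it is approximately controllable by an $E$-valued control, with a final-state bound of the same form; (ii) the fact that $E_\Lambda$ is saturating; and (iii) approximate controllability whenever the control space is dense in the local topology. Granting these, one applies~(i) finitely many times to reduce controllability by $E_\Lambda$ to controllability by $\FF^k(E_\Lambda)$ for an arbitrarily large~$k$, and then invokes~(ii) and~(iii).

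The second assertion is algebraic. For $\eta\in E_\Lambda$ the function $B(\eta,\eta)=\tfrac12\p_x(\eta^2)$ is a trigonometric polynomial whose frequencies lie in $(\Lambda+\Lambda)\cup(\Lambda-\Lambda)$, differentiation merely suppressing the constant mode. Choosing $\eta$ to be a pure harmonic shows that $\FF(E_\Lambda)$ contains the doubled frequencies $2\lambda$ with both admissible phases, since $\cos$ and $\sin$ seeds produce opposite signs of $\sin(2\lambda x)$, while mixed seeds such as $\cos(\lambda_ix)+\cos(\lambda_jx)$ produce the sum and difference frequencies $\lambda_i\pm\lambda_j$. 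The delicate point is that these new harmonics appear in correlated \emph{bundles} carrying a fixed sign, the amplitude of the oscillating control entering squared, so that isolating a single new frequency requires cancelling the remaining components against others already under control; this is precisely what the six frequencies of~$\Lambda$, in particular the explicit presence of $2\lambda_1,2\lambda_2$ and $\lambda_1+\lambda_2$, are designed to make possible. Iterating, one checks that $\bigcup_k\FF^k(E_\Lambda)$ contains all $\cos(\lambda x),\sin(\lambda x)$ with $\lambda$ in the additive group $\lambda_1\Z+\lambda_2\Z$; since $\lambda_1,\lambda_2$ are incommensurable, this group is dense in~$\R$, and the Stone--Weierstrass theorem, followed by a routine mollification, yields density in $H^s([-r,r])$ for every $r$ and~$s$.

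The analytic core is the relaxation principle~(i). Suppose $\tilde\eta\in C^\infty(J_T,\FF(E))$ steers the system into the prescribed local neighbourhood of~$\hat u$, and decompose the part of $\tilde\eta$ lying outside $E$ into a finite sum of terms $B(\eta,\eta)$ with $\eta\in E$, to be realised by superposition. To realise one such term by a genuine $E$-valued control, substitute $u=v+\zeta$ with $\zeta(t,x)=g(t/\delta)\,\eta(x)$, where $g$ is a mean-zero $1$-periodic profile and $\delta=T/N\to0$. Then $v$ satisfies
\[
\p_tv-\mu\p_x^2v+v\p_xv+\p_x(\zeta v)=-\zeta\p_x\zeta+h+\eta_1,
\]
provided the $E$-valued control is taken as $\eta=\dot\zeta-\mu\p_x^2\zeta+\eta_1$, which is legitimate because $\p_x^2E\subset E$; note that the large term $\dot\zeta$ is absorbed into the substitution and never forces~$v$ directly. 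As $\delta\to0$ the self-interaction $-\zeta\p_x\zeta=-g(t/\delta)^2\,\eta\p_x\eta$ converges, in the sense of time averages, to $-\langle g^2\rangle\,\eta\p_x\eta=-\langle g^2\rangle B(\eta,\eta)$ with $\langle g^2\rangle=\int_0^1g(s)^2\,ds$, while the purely oscillatory cross term $\p_x(\zeta v)$ has vanishing mean and contributes an $O(\delta)$ perturbation; choosing $g(0)=0$ gives $\zeta(T)=0$, hence $u(T)=v(T)$, and adjusting $\langle g^2\rangle$ together with the phase of~$\eta$ matches the prescribed coefficient and sign. It remains to prove that $v$ converges to the trajectory of the $\FF(E)$-controlled equation locally in~$H^s$, which is exactly where the uniform-local estimates and the maximum principle of Section~\ref{s3} enter.

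The remaining assertion~(iii) is comparatively soft: given density of the control space, one fixes a smooth path $u_*(t)$ joining a regularisation of~$u_0$ to~$\hat u$, computes the forcing $\eta_*=\p_tu_*-\mu\p_x^2u_*+u_*\p_xu_*-h$ that produces it exactly, approximates $\eta_*(t)$ in the local topology by an $E$-valued control, and concludes by continuous dependence together with the maximum-principle bound, which secures the global estimate $\|u(T)\|_{H_{\rm ul}^s\cap L^\infty}\le K_s$ uniformly in $r$ and~$\e$. I expect the main obstacle to be the relaxation step on the unbounded domain: in the absence of compact Sobolev embeddings one cannot extract convergent subsequences, so the passage $\delta\to0$ must be made quantitative, with bounds on~$v$ and on the oscillatory remainder that are uniform in~$x$ and in~$\delta$. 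Establishing these $H_{\rm ul}^s\cap L^\infty$ estimates, and propagating the sign-and-bundle bookkeeping of the saturation step through them, is the heart of the matter.
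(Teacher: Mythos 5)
Your overall strategy is the paper's: Agrachev--Sarychev applied on the line, with well-posedness and continuity of the resolving operator in the uniformly local spaces replacing compactness, an extension/relaxation step realised by fast oscillating controls, an algebraic saturation step generating a dense set of frequencies from~$\Lambda$, and a final density argument along an explicit interpolating trajectory whose exact forcing is then localised and approximated by a truncated Fourier series. Those parts (your items (iii) and the analytic infrastructure) match Sections~\ref{s3}, \ref{s4.4} and~\ref{s4.5}.

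The genuine gap is in your relaxation principle~(i) as stated. You define the extension as the \emph{linear span} $\FF(E)=E+\linspan\{\BB(\eta):\eta\in E\}$ and assert that $\FF(E)$-controllability implies $E$-controllability; but your own oscillation computation shows that the effective drift produced by $\zeta=g(t/\delta)\,\eta$ is $-\langle g^2\rangle\,\BB(\eta)$ with $\langle g^2\rangle\ge0$, so what fast oscillation buys, modulo~$E$, is only the \emph{convex cone} generated by $\{-\BB(\eta):\eta\in E\}$, never the span. For a general finite-dimensional $E$ the inclusion $\linspan\{\BB(\eta)\}\subset E+\co\{-\BB(\eta)\}$ is false (take $E$ one-dimensional with $\BB(\eta_0)\notin E$: then $+\BB(\eta_0)$ is unreachable), so assertion~(i) cannot be proved in the generality in which you invoke it. You flag the sign/bundle problem as ``the delicate point'' and then defer it, but it is not an estimate to be propagated --- it is the missing idea, and it dictates the architecture of the paper's proof. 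There the extension $\FF(N,G)$ is spanned by $\eta+\xi\p_x\tilde\xi+\tilde\xi\p_x\xi$ with $\xi\in G$ and $\tilde\xi$ restricted to a fixed set $N$ satisfying $\BB(N)\subset G$: the cross term is \emph{linear} in $\xi$, so both signs come for free, and in the identity $\xi\p_x\tilde\xi+\tilde\xi\p_x\xi=\BB(\e\xi+\e^{-1}\tilde\xi)-\e^2\BB(\xi)-\e^{-2}\BB(\tilde\xi)$ the one term of uncontrollable sign and size, $\e^{-2}\BB(\tilde\xi)$, is absorbed into the $G$-valued control precisely because $\BB(N)\subset G$ --- which is why $2\lambda_1$, $2\lambda_2$ and $0$ are put into $\Lambda$ by hand. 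Your route can be repaired for the particular spaces $\FF^k(E_\Lambda)$ by verifying, harmonic by harmonic, that swapping $\cos/\sin$ seeds and polarising realises both signs of every generated frequency inside the cone; but that verification is exactly the paper's Lemma~\ref{l2.7} in disguise, and without it the proof is incomplete. A secondary flaw in the same step: realising $\sum_jc_j\BB(\eta_j)$ is not literal superposition of the $\zeta$'s, since the self-interaction is quadratic and cross-correlations $\langle g_jg_k\rangle$ appear; one must cycle through the $\eta_j$ on disjoint subintervals of the fast period, as in the paper's piecewise-constant relaxation.
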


A proof of this theorem is given in Section~\ref{s4}. Here we outline its scheme. Let us fix an integer $s\ge0$ and functions $u_0\in L^\infty$ and $\hat u\in H_{\rm loc}^s$. In view of the regularising property of the resolving operator for~\eqref{1.1} (see Proposition~\ref{p3.3}), there is no loss of generality in assuming that $u_0\in C_b^\infty$, and by a density argument, we can also assume that $\hat u\in C_b^\infty$. Furthermore, as it is proved in Section~\ref{s4.5}, if inequalities~\eqref{2.1} are established for $s=0$, then simple interpolation and regularisation arguments show that it remains true for any $s\ge1$. Thus, it suffices to prove~\eqref{2.1} for $s=0$. 

\smallskip
Given a finite-dimensional subspace $G\subset C_b^\infty$, we consider the controlled equations
\begin{align}
\p_t u-\mu\p_x^2u+\BB(u)&=h(t,x)+\eta(t,x),\label{2.2}\\
\p_t u-\mu\p_x^2(u+\zeta(t,x))+\BB(u+\zeta(t,x))&=h(t,x)+\eta(t,x),\label{2.3}
\end{align}
where $\eta$ and $\zeta$ are $G$-valued controls, and we set $\BB(u)=u\p_xu$. We say that Eq.~\eqref{2.2} is {\it $(\e,r,G)$-controllable at time~$T$ for the pair $(u_0,\hat u)$\/} (or simply {\it $G$-controllable\/} if the other parameters are fixed) if one can find $\eta\in C^\infty(J_T,G)$ such that the solution~$u$ of~\eqref{2.2}, \eqref{1.2} satisfies inequalities~\eqref{2.1} with $s=0$. The concept of $(\e,r,G)$-controllability for~\eqref{2.3} is defined in a similar way. 

We need to prove that~\eqref{2.2} is {\it $E_\Lambda$-controllable\/}. This fact will be proved in four steps. From now on, we assume that functions $u_0,\hat u\in C_b^\infty(\R)$ and the positive numbers~$T$, $\e$, and~$r$ are fixed and do not indicate explicitely the dependence of other quantities on them. 

\medskip
\underline{\sl Step~1: Extension}. 
Let us fix a finite-dimensional subspace $G\subset C_b^\infty$. 
Even though Eq.~\eqref{2.3} contains more control functions than Eq.~\eqref{2.2}, the property of  $G$-controllability is equivalent for them. Namely, we have the following result.

\begin{proposition} \label{p2.3}
Equation~\eqref{2.2} is $G$-controllable if and only if so is Eq.~\eqref{2.3}. 
\end{proposition}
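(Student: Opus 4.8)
The plan is to exploit the change of unknown $v=u+\zeta$, which intertwines the two equations, and to reduce the whole statement to an adjustment of the temporal boundary values of the control $\zeta$. The underlying algebraic fact is this: if $u$ solves~\eqref{2.3} with controls $(\eta,\zeta)$, then $v=u+\zeta$ satisfies
\begin{equation*}
\p_t v-\mu\p_x^2 v+\BB(v)=h+\eta+\p_t\zeta,
\end{equation*}
i.e. $v$ solves~\eqref{2.2} with the $G$-valued control $\eta+\p_t\zeta$; conversely, if $v$ solves~\eqref{2.2} with a control $\eta_0$ and $\zeta\in C^\infty(J_T,G)$ is arbitrary, then $u=v-\zeta$ solves~\eqref{2.3} with controls $(\eta_0-\p_t\zeta,\zeta)$. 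In both cases the new control stays in $C^\infty(J_T,G)$, since differentiation in~$t$ preserves the fixed subspace~$G$.

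The easy implication is that $G$-controllability of~\eqref{2.2} yields that of~\eqref{2.3}: one simply takes $\zeta\equiv0$, for which~\eqref{2.3} reduces to~\eqref{2.2}. For the converse, suppose $(\eta,\zeta)$ steer~\eqref{2.3} from $u_0$ into the prescribed neighbourhood of $\hat u$, with solution $u$. Then $v=u+\zeta$ solves~\eqref{2.2}, but $v(0)=u_0+\zeta(0)$ and $v(T)=u(T)+\zeta(T)$. The main obstacle is therefore the possibility that $\zeta$ fails to vanish at the endpoints $t=0$ and $t=T$: if $\zeta(0)=\zeta(T)=0$, then $v(0)=u_0$ and $v(T)=u(T)$, which already satisfies both inequalities in~\eqref{2.1} with $s=0$, and we are done.

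Thus the key step is to show that one may assume $\zeta(0)=\zeta(T)=0$. To this end I would replace $\zeta$ by $\zeta_\tau=\phi_\tau\zeta$, where $\phi_\tau\in C^\infty(J_T,[0,1])$ equals~$1$ on $[\tau,T-\tau]$ and vanishes at $t=0$ and $t=T$, keeping the same $\eta$, and denote by $u_\tau$ the corresponding solution of~\eqref{2.3} issued from~$u_0$. The controls $\zeta_\tau$ and $\zeta$ differ only on $[0,\tau]\cup[T-\tau,T]$, where they remain uniformly bounded; hence on each of these short intervals the trajectories $u_\tau$ and $u$ drift apart by $O(\tau)$, while on the middle interval $[\tau,T-\tau]$ they obey the same equation, so that the discrepancy at $t=\tau$ is propagated to $t=T-\tau$ by continuous dependence on the initial datum, with a constant depending only on $T$ and the size of the data. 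Consequently $\|u_\tau(T)-u(T)\|\to0$ as $\tau\to0^+$ in the local topology, and the $L^\infty$-bound is preserved up to a constant. Choosing $\tau$ small enough, $u_\tau(T)$ still lies within the prescribed neighbourhood of $\hat u$; since now $\zeta_\tau(0)=\zeta_\tau(T)=0$, the function $v_\tau=u_\tau+\zeta_\tau$ solves~\eqref{2.2} with control $\eta+\p_t\zeta_\tau$, starts at $u_0$, and ends within that neighbourhood, which proves $G$-controllability of~\eqref{2.2}.

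The analytic input I would draw on is the well-posedness theory of Section~\ref{s3}: continuous dependence of the solution of~\eqref{2.3} on the initial datum over a fixed time interval, together with a short-time estimate quantifying the effect on the solution of a bounded change of $\zeta$ on a small interval. The only genuinely delicate point is the endpoint estimate—controlling $\|u_\tau(T)-u(T)\|$ uniformly even though $\p_t\zeta_\tau$ may grow like $\tau^{-1}$. This is harmless, however, because in~\eqref{2.3} it is $\zeta_\tau$ itself, and not its time derivative, that enters the equation; the perturbation of the coefficients therefore stays bounded and integrates to $O(\tau)$ over the short intervals, so no blow-up occurs and the argument closes.
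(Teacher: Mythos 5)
Your proposal is correct and follows essentially the same route as the paper: the substitution $v=u+\zeta$ turning~\eqref{2.3} into~\eqref{2.2} with control $\eta+\p_t\zeta$, after first reducing to the case $\zeta(0)=\zeta(T)=0$. The cutoff argument you spell out is precisely what the paper compresses into one sentence by invoking the Lipschitz estimate~\eqref{3.15}, which controls the effect of the $O(\tau)$-perturbation of~$\zeta$ near the endpoints.
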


\underline{\sl Step~2: Convexification}. 
Let us fix a subset $N\subset C_b^\infty$ invariant under multiplication by real numbers such that
\begin{equation} \label{2.5}
 N\subset G, \quad \BB(N)\subset G.
\end{equation}
We denote by $\FF(N,G)\subset C_b^\infty$ the  vector span of functions of the form
\begin{equation} \label{2.6}
\eta+\xi\p_x\tilde\xi+\tilde\xi\p_x\xi,
\end{equation}
where $\eta,\xi\in G$ and $\tilde\xi\in N$. It is easy to see that~$\FF(N,G)$ is a finite-dimensional subspace contained in the convex envelope of~
$G$ and~$\BB(G)$; cf.\ Lemma~\ref{l5.1} in Section~\ref{s4.2}. The following proposition  is an infinite-dimensional analogue of the well-known convexification principle for controlled ODE's (e.g., see~\cite[Theorem~8.7]{AS2004}). 

\begin{proposition} \label{p2.5}
Under the above hypotheses, Eq.~\eqref{2.3} is $G$-controllable if and only if Eq.~\eqref{2.2} is $\FF(N,G)$-controllable.
\end{proposition}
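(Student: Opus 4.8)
The plan is to prove the two implications separately; the forward one is immediate from Proposition~\ref{p2.3}, while the converse is the genuine convexification statement and carries the whole difficulty.

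\emph{The easy implication.} First I record that $G\subset\FF(N,G)$: taking $\xi=0$ in~\eqref{2.6} (or $\tilde\xi=0$, which lies in~$N$ since~$N$ is stable under multiplication by real numbers) shows that every $\eta\in G$ belongs to~$\FF(N,G)$. Hence, if Eq.~\eqref{2.3} is $G$-controllable, then by Proposition~\ref{p2.3} so is Eq.~\eqref{2.2}, and since any $C^\infty(J_T,G)$ control is a fortiori a $C^\infty(J_T,\FF(N,G))$ control, Eq.~\eqref{2.2} is $\FF(N,G)$-controllable.

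\emph{The converse (convexification).} Assume Eq.~\eqref{2.2} is $\FF(N,G)$-controllable and fix a control $\eta^\ast\in C^\infty(J_T,\FF(N,G))$ whose trajectory~$u^\ast$ meets~\eqref{2.1} with $s=0$. By Proposition~\ref{p2.3} it is enough to construct a $G$-valued control for Eq.~\eqref{2.3} whose trajectory approximates~$u^\ast(T)$ in the norms of~\eqref{2.1}. Solving Eq.~\eqref{2.3} for~$\p_tu$ yields
\[
\p_tu-\mu\p_x^2u+\BB(u)=h+\eta+\mu\p_x^2\zeta-\BB(\zeta)-(u\p_x\zeta+\zeta\p_xu),
\]
so the second control~$\zeta$ acts on the $u$-equation through its self-interaction $\BB(\zeta)$ and the linear terms $\mu\p_x^2\zeta$ and $u\p_x\zeta+\zeta\p_xu$. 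The idea is to let~$\zeta$ oscillate rapidly in time, with frequency~$\omega$, between finitely many $G$-valued profiles, chosen following Lemma~\ref{l5.1} so that the time-average of the $\zeta$-induced forcing reproduces the prescribed element of~$\FF(N,G)$ modulo~$G$, while the means of the purely linear terms vanish. Writing $\eta^\ast(t)=\eta_0(t)+\sum_j\bigl(\xi_j(t)\p_x\tilde\xi_j(t)+\tilde\xi_j(t)\p_x\xi_j(t)\bigr)$ with $\eta_0,\xi_j\in C^\infty(J_T,G)$ and $\tilde\xi_j\in C^\infty(J_T,N)$, I would realise each generator by such a relaxation and use the $G$-valued control~$\eta$ to absorb the finitely many $G$-valued residual means, in particular the terms $\BB(\tilde\xi_j)\in G$ furnished by~\eqref{2.5}. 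That $\FF(N,G)$ lies in the convex envelope of~$G$ and~$\BB(G)$, proved in Lemma~\ref{l5.1}, is exactly what makes this relaxation realizable with~$\zeta$ valued in~$G$.

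\emph{Convergence and the main obstacle.} It then remains to show that the trajectory $u_\omega$ of Eq.~\eqref{2.3} driven by the frequency-$\omega$ control converges to~$u^\ast$, uniformly on~$J_T$ in $H_{\rm ul}^0\cap L^\infty$, as $\omega\to\infty$, and in particular that $\|u_\omega(T)\|_{H_{\rm ul}^0\cap L^\infty}\le K_0$ with $K_0$ independent of~$\omega$, $r$, and~$\e$. I would obtain this by writing the equation for $u_\omega-u^\ast$ and closing a Gronwall estimate, using the well-posedness and a priori bounds of Section~\ref{s3} (parabolic smoothing and the maximum principle) and integrating the oscillatory terms by parts in time, which produces the gain factors~$\omega^{-1}$. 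The hard part is precisely this averaging step: the cross term $u\p_x\zeta+\zeta\p_xu$ is of order one and linear in the unknown, so proving that its fast oscillation transfers no net energy to $u_\omega-u^\ast$, uniformly in~$\omega$ and compatibly with the uniform end-state bound, is delicate; moreover it must be carried out in the unbounded-domain norms $H_{\rm ul}^s\cap L^\infty$, so that the local-in-space estimates of Section~\ref{s3} replace the global energy identities available on a bounded interval.
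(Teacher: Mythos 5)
Your overall architecture is the right one and matches the paper's: the forward implication via Proposition~\ref{p2.3} and the inclusion $G\subset\FF(N,G)$, and the converse via the algebraic identity of Lemma~\ref{l5.1} together with a $G$-valued control $\zeta$ that oscillates rapidly in time between finitely many profiles $\zeta^j$ whose convex combination realises the prescribed $\FF(N,G)$-valued forcing. Up to that point you are reconstructing the paper's proof.

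The gap is in the convergence step, which you yourself flag as ``delicate'' but do not resolve, and the route you propose for it is not the one that works. You want to write the equation for $u_\omega-u^\ast$ and close a Gronwall estimate, extracting factors $\omega^{-1}$ by integrating the oscillatory terms by parts in time. The obstruction is exactly the cross term $u_\omega\p_x\zeta+\zeta\p_x u_\omega$: its time-primitive involves $\int_0^t u_\omega(s)\,\p_x\zeta_\omega(s)\,ds$, and since $u_\omega$ itself depends on $\omega$ and is only controlled in weak norms, there is no a priori reason this primitive is small --- the unknown could in principle correlate with the oscillation, and a Gronwall inequality on $u_\omega-u^\ast$ never sees the cancellation. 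The paper sidesteps this by inserting an intermediate, $\omega$-independent object: the averaged (relaxed) equation~\eqref{4.7}, whose solution $\tilde u$ is first shown to be $O(\nu)$-close to $u_1$ using Lemma~\ref{l5.1} and the Lipschitz continuity of the resolving operator. Equation~\eqref{4.7} is then rewritten as Eq.~\eqref{2.3} with the oscillating $\zeta_m$ plus an error $f_m$ that is an explicit function of the \emph{fixed} trajectory $\tilde u$ and of $\zeta_m$. Because $f_m$ involves no unknown, its primitive $F_m(t)=\int_0^t f_m(s)\,ds$ tends to zero uniformly by elementary averaging, and after the substitution $v_m=\tilde u-\KK f_m$ (with $\KK$ the Duhamel operator) the conclusion follows from the Lipschitz estimate~\eqref{3.33} of Remark~\ref{r3.1} --- no oscillatory integral involving $u_\omega$ ever has to be estimated. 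Without this detour through the averaged equation, your plan stalls precisely at the step you identify as hard.

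A secondary point: the comparison must be carried out in $H_{\rm ul}^1$ at time $T$ (as in~\eqref{4.1}), not merely in $L_{\rm ul}^2\cap L^\infty$, since the uniform $L^\infty$ bound in~\eqref{2.1} is recovered through the embedding $H_{\rm ul}^1\subset L^\infty$; this is also why Remark~\ref{r3.1}, and not just Theorem~\ref{t3.2}, is needed.
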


\underline{\sl Step~3: Saturation}. 
Propositions~\ref{p2.3} and~\ref{p2.5} (and their proof) imply the following result, which is a kind of ``relaxation property'' for  the controlled Burgers equation. 

\begin{proposition} \label{p2.6}
Let $N,G\subset C_b^\infty$ be as in Step~2. Then Eq.~\eqref{2.2} is $G$-controllable if and only if it is $\FF(N,G)$-controllable. Moreover, the constant~$K_0$ of~\eqref{2.1} corresponding to Eq.~\eqref{2.2} with $G$-valued control can be made arbitrarily close to that for Eq.~\eqref{2.2} with $\FF(N,G)$-valued control. 
\end{proposition}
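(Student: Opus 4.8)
The plan is to obtain the equivalence by simply chaining Propositions~\ref{p2.3} and~\ref{p2.5}, and then to reread their proofs in order to track the constant~$K_0$ of~\eqref{2.1}. One implication is immediate: taking $\xi=0$ in~\eqref{2.6} shows that $G\subset\FF(N,G)$, so every $G$-valued control is in particular an $\FF(N,G)$-valued control, whence $G$-controllability of~\eqref{2.2} trivially implies its $\FF(N,G)$-controllability with the same~$K_0$. For the converse, suppose~\eqref{2.2} is $\FF(N,G)$-controllable. By Proposition~\ref{p2.5}, Eq.~\eqref{2.3} is then $G$-controllable, and by Proposition~\ref{p2.3}, Eq.~\eqref{2.2} is $G$-controllable. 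This already proves the first assertion.

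To control the constant, I would follow the two implications in turn and locate where $K_0$ can deteriorate. The passage from~\eqref{2.3} to~\eqref{2.2} (the extension step) is in fact free of charge. Indeed, if $u$ solves~\eqref{2.3} with $G$-valued controls $\eta,\zeta$, then $w:=u+\zeta$ satisfies
\begin{equation*}
\p_t w-\mu\p_x^2w+\BB(w)=h+(\eta+\p_t\zeta),
\end{equation*}
that is, $w$ solves~\eqref{2.2} with the $G$-valued control $\eta+\p_t\zeta\in C^\infty(J_T,G)$ (here we use that~$G$ is a fixed finite-dimensional space, so $\p_t\zeta$ is again $G$-valued). Arranging, as one may in the proof of Proposition~\ref{p2.3}, that $\zeta(0)=\zeta(T)=0$, we get $w(0)=u_0$ and $w(T)=u(T)$; in particular $w$ and $u$ have the same final state, so both the bound and the proximity to~$\hat u$ in~\eqref{2.1} are preserved verbatim, and the constant is unchanged.

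Hence the entire variation of~$K_0$ comes from the convexification step, i.e.\ from the construction behind Proposition~\ref{p2.5}, which realises an $\FF(N,G)$-valued control for~\eqref{2.2} by a rapidly relaxing $G$-valued control for~\eqref{2.3}. The point to extract from that proof is that the final state of the $G$-controlled equation~\eqref{2.3} converges, in the local topology $L^2([-r,r])$ entering~\eqref{2.1} with $s=0$, to the final state of the $\FF(N,G)$-controlled equation~\eqref{2.2} as the relaxation parameter is refined, while the $L^\infty$-norm of the approximating solutions stays uniformly bounded by virtue of the maximum principle for the Burgers equation (Section~\ref{s3}). Choosing the parameter far enough along this family, the final $L^\infty$-bound exceeds the one for the $\FF(N,G)$-control by less than any prescribed~$\delta>0$, while the $L^2([-r,r])$-error stays below~$\e$. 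Composing with the constant-preserving extension step above, we obtain a $G$-valued control whose constant~$K_0$ lies within~$\delta$ of the one furnished by the $\FF(N,G)$-valued control, which is the second assertion.

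The main obstacle is precisely this last uniform control of the $L^\infty$-norm along the relaxation: a priori, inserting a high-amplitude, fast control~$\zeta$ into the quadratic term $\BB(u+\zeta)$ could amplify the final state, which would make the estimate useless for the forthcoming saturation iteration, where the relaxation has to be applied repeatedly without an accumulating blow-up of~$K_0$. It is the maximum principle, bounding $\|u(T,\cdot)\|_{L^\infty}$ independently of the oscillation frequency, that closes this gap and yields the stated closeness of the constants.
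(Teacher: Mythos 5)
Your derivation of the equivalence itself is exactly the paper's route: Proposition~\ref{p2.6} is obtained by chaining Proposition~\ref{p2.5} (which turns $\FF(N,G)$-controllability of~\eqref{2.2} into $G$-controllability of~\eqref{2.3}) with Proposition~\ref{p2.3} (which turns that into $G$-controllability of~\eqref{2.2}), together with the trivial inclusion $G\subset\FF(N,G)$; the paper offers nothing more for the first assertion, so there is no issue there.

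The gap is in your tracking of~$K_0$ through the convexification step. You attribute the uniform $L^\infty$-bound along the relaxation family to the maximum principle and assert that this ``closes the gap''. It does not. Writing~\eqref{2.3} in the form~\eqref{3.2}, estimate~\eqref{3.4} carries the factor $\exp\bigl(\|\p_x\zeta\|_{L^1(J_T,L^\infty)}\bigr)$ and a source term containing $\mu\p_x^2\zeta-\zeta\p_x\zeta$; the controls produced by Lemma~\ref{l5.1} have amplitude of order $\e^{-1}$ (they are built from $\e\xi_j+\e^{-1}\tilde\xi_j$), and the $L^1$-in-time norms of these quantities gain nothing from the fast oscillation. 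So the maximum principle yields a bound that is uniform in the oscillation frequency but degrades as $\nu\to0$ and is in no way within~$\delta$ of~$K_0$. Moreover, even granting some uniform $L^\infty$-bound, combining it with convergence of the final states in $L^2([-r,r])$ --- which is all you invoke --- cannot give $\|u^m(T)\|_{L^\infty(\R)}\le K_0+\delta$: uniform boundedness plus local $L^2$-convergence says nothing about the global sup-norm approaching~$K_0$. The mechanism the paper actually uses is convergence of the final states in $H_{\rm ul}^1$: inequality~\eqref{4.9} gives $\|u_1(T)-\tilde u(T)\|_{H_{\rm ul}^1}\le C\sqrt{2T}\,\nu$, and~\eqref{4.10} (via~\eqref{3.33} and~\eqref{52}) gives $\|u^m(T)-\tilde u(T)\|_{H_{\rm ul}^1}\to0$; since $H_{\rm ul}^1\hookrightarrow L^\infty$, this yields the displayed chain after~\eqref{4.4}, namely $\|u^m(T)\|_{L^\infty}\le K_0+C_1(\delta+\gamma_m)$. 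That is the estimate you must cite in place of the maximum principle. A further, harmless, imprecision: the extension step is not exactly constant-preserving --- arranging $\zeta(0)=\zeta(T)=0$ requires perturbing~$\zeta$ and, by~\eqref{3.15}, replacing~$K_0$ by a slightly larger constant --- but since the conclusion is only ``arbitrarily close'', this does not affect the statement.
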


We now set $N=\{c\cos(\lambda_1 x), c\sin(\lambda_1 x), c\cos(\lambda_2 x), c\sin(\lambda_2 x), c\in\R\}$ and define $E_k=\FF(N,E_{k-1})$ for $k\ge1$, where $E_0=E_\Lambda$. Note that $\BB(N)\subset E_\Lambda$  (this inclusion will be important in the proof of Lemma~\ref{l5.1}). It follows from Proposition~\ref{p2.5}  that Eq.~\eqref{2.2} is $E_\Lambda$-controllable if and only if it is $E_k$-controllable for some integer~$k\ge1$. We shall show that the latter property is true for a sufficiently large~$k$. To this end, we first establish the following saturation property: there is a dense countable subset $\Lambda_\infty\subset\R_+$ such that
\begin{equation} \label{2.7}
\bigcup_{k=1}^\infty E_k\mbox{ contains the functions $\sin(\lambda x)$ and $\cos(\lambda x)$ with $\lambda\in\Lambda_\infty$}.
\end{equation}

\underline{\sl Step~4: Large control space}. 
Once~\eqref{2.7} is proved, one can easily show that~\eqref{2.2} is $E_k$-controllable for a sufficiently large~$k$. To this end, it suffices to join~$u_0$ and~$\hat u$ by a smooth curve, to use Eq.~\eqref{2.2} to define the corresponding control~$\eta$, and to approximate it, in local topologies,  by functions belonging to~$E_k$. The fact that the corresponding solutions are close follows from continuity of the resolving operator for~\eqref{2.2} in local norms (see Proposition~\ref{p3.4}). This will complete the proof of Theorem~\ref{t2.1}.

\section{Cauchy problem}
\label{s3}
In this section, we discuss the existence and uniqueness of a solution for the Cauchy problem for the generalised Burgers equation 
\begin{equation} \label{3.1}
\p_t u-\mu\p_x^2\bigl(u+g(t,x)\bigr)+\BB\bigl(u+g(t,x)\bigr)=f(t,x), \quad x\in\R, 
\end{equation}
where $f$ and $g$ are given functions. We also establish some a priori estimates for higher Sobolev norms and Lipschitz continuity of the resolving operator in local norms. The techniques of the maximum principle and of weighted energy estimates enabling  one to derive this type of results are well known, and sometimes we confine ourselves to the formulation of a result and a sketch of its proof. 

\subsection{Existence, uniqueness, and regularity of a solution}
\label{s3.1}
Before studying the well-posedness of the Cauchy problem for Eq.~\eqref{3.1}, we recall some results for the linear equation
\begin{equation} \label{3.2}
\p_t v-\mu\p_x^2 v+a(t,x)\p_xv+b(t,x)v=c(t,x), \quad x\in\R,
\end{equation}
supplement with the initial condition
\begin{equation} \label{3.3}
v(0,x)=v_0(x),
\end{equation}
where $v_0\in L^\infty(\R)$. The following proposition establishes the existence, uniqueness, and a priori estimates for a solution of problem~\eqref{3.1}, \eqref{3.2} in spaces with no decay condition at infinity.

\begin{proposition} \label{p3.1}
Let $T>0$ and let $a$, $b$, $c$, and $f$ be some functions such that
$$
a\in L^2(J_T,L^\infty), \quad b,c\in L^1(J_T,L^\infty),
$$
Then for any $v_0\in L^\infty$ problem~\eqref{3.2}, \eqref{3.3} has a unique solution $v(t,x)$ such that
$$
v\in L^\infty(J_T\times\R)\cap C_*(J_T,L_{\rm ul}^2), \quad \|\p_xv(\cdot,x)\|_{L^2(J_T)}\in L_{\rm ul}^2. 
$$
Moreover, this solution satisfies the inequalities
\begin{align}
\|v\|_{L^\infty(J_t\times\R)}&\le \exp\bigl(\|b\|_{L^1(J_t,L^\infty)}\bigr)\Bigl(\|v_0\|_{L^\infty}
+\|c\|_{L^1(J_t,L^\infty)}\Bigr),\label{3.4}\\
\|v(t)\|_{L_{\rm ul}^2}+\|\p_xv\|_{L_{\rm ul}^2L^2(J_t)}
&\le C\,e^{C(\bar a(t)+\bar b(t))}
\Bigl(\|v_0\|_{L_{\rm ul}^2}+\|c\|_{L_{\rm ul}^2L^2(J_t)}\Bigr),\label{3.5}
\end{align}
where $0\le t\le T$, $C>0$ is an absolute constant, and
$$
{\bar b}(t)=\|b\|_{L^2(J_t,L_{\rm ul}^2)}^2, \quad 
{\bar a}(t)=\|a\|_{L^2(J_t,L^\infty)}^2, \quad 
\|c\|_{L_{\rm ul}^2L^2(J_t)}=\sup_{y\in\R}\|c\|_{L^2(J_t\times[y,y+1])}. 
$$
If, in addition, we have $a\in L^\infty(J_T\times\R)$, then $u\in L^p(J_T,H_{\rm ul}^1)$ for any $p\in[1,\frac43)$ and 
\begin{equation} \label{3.6}
\|v\|_{L^p(J_t,H_{\rm ul}^1)}
\le C_1\Bigl(\|v_0\|_{L_{\rm ul}^2}+\int_0^t\|c(r)\|_{L_{\rm ul}^2}dr\Bigr),
\end{equation}
where $C_1>0$ depends only on~$p$, $\|a\|_{L^\infty}$, and $\|b\|_{L^2(J_T,L_{\rm ul}^2)}$.
\end{proposition}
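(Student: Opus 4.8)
The plan is to establish the four assertions in the order in which they are used, obtaining existence together with the $L^\infty$ bound first and then feeding the latter into the weighted energy estimate. For existence I would argue by approximation: solving the problem on a large interval $[-R,R]$ with (say) periodic boundary conditions and smoothed coefficients places one in the range of classical linear parabolic theory, which produces smooth approximations $v_R$. Everything then hinges on deriving a priori bounds uniform in $R$, namely precisely the estimates~\eqref{3.4} and~\eqref{3.5}. Since these are local and $R$-independent, a compactness argument --- weak-$*$ limits in $L^\infty(J_T\times\R)$, weak limits of $\p_xv_R$ in $L_{\rm loc}^2$, and strong $L_{\rm loc}^2$ convergence via an Aubin--Lions-type lemma (using that $\p_tv_R$ is bounded in $L^1(J_T,H_{\rm loc}^{-1})$ through the equation) --- yields a limit $v$ solving~\eqref{3.2},~\eqref{3.3} in the asserted class; the $C_*(J_T,L_{\rm ul}^2)$ regularity, including the initial layer as $t\to0^+$, follows from~\eqref{3.5} and standard parabolic continuity. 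Uniqueness is immediate: the difference $w$ of two solutions solves the homogeneous problem with $w(0)=0$, and either~\eqref{3.4} or~\eqref{3.5} applied to $w$ forces $w\equiv0$.

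The bound~\eqref{3.4} is the maximum principle. Writing $\Phi(t)=\exp\bigl(\|b\|_{L^1(J_t,L^\infty)}\bigr)\bigl(\|v_0\|_{L^\infty}+\|c\|_{L^1(J_t,L^\infty)}\bigr)$, I would check that $\Phi-v$ and $\Phi+v$ are nonnegative at $t=0$ and are supersolutions of $\p_t-\mu\p_x^2+a\p_x$ --- the exponential factor absorbs the zeroth-order coefficient $b$ and the integral of $\|c\|_{L^\infty}$ dominates the source --- and conclude $|v|\le\Phi$ by comparison. The only delicate point is that comparison must be invoked on the unbounded domain; this is standard for \emph{bounded} solutions (a Phragm\'en--Lindel\"of/Tychonoff uniqueness-class argument), and in any case is cleanest to verify for the bounded-domain approximants $v_R$ and then pass to the limit.

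For the weighted energy estimate~\eqref{3.5} I would fix $y\in\R$ and a smooth weight $\theta_y$ centred at $y$, conveniently a smoothed version of $e^{-|x-y|}$, so that $|\theta_y'|\le\theta_y$ and $\sum_{n\in\Z}\sup_{[n,n+1]}\theta_y^2\le C$ uniformly in $y$. Testing~\eqref{3.2} against $v\theta_y^2$ and integrating by parts gives
\[
\tfrac12\tfrac{d}{dt}\|v\theta_y\|^2+\mu\|\theta_y\p_xv\|^2
=-2\mu\!\int\!\theta_y\theta_y'\,v\,\p_xv-\!\int\! a(\p_xv)v\theta_y^2-\!\int\! b\,v^2\theta_y^2+\!\int\! c\,v\theta_y^2 .
\]
The first term on the right is controlled by $\tfrac{\mu}{4}\|\theta_y\p_xv\|^2+C\|v\theta_y\|^2$ using $|\theta_y'|\le\theta_y$; the advection term by $\tfrac{\mu}{4}\|\theta_y\p_xv\|^2+C\mu^{-1}\|a\|_{L^\infty}^2\|v\theta_y\|^2$, which produces the $\bar a$ in the exponent. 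For the zeroth-order term I would use the bound~\eqref{3.4} already in hand: by Cauchy--Schwarz against the weight and the uniform overlap of the $\theta_y$, $\int|b|\,v^2\theta_y^2\le C\|v\|_{L^\infty}\|b\|_{L_{\rm ul}^2}\|v\theta_y\|\le\tfrac12\|b\|_{L_{\rm ul}^2}^2\|v\theta_y\|^2+C\|v\|_{L^\infty}^2$, whose first summand yields $\bar b=\|b\|_{L^2(J_t,L_{\rm ul}^2)}^2$ after integration in time (a one-dimensional Gagliardo--Nirenberg argument is an alternative route). The source term gives the $\|c\|_{L_{\rm ul}^2L^2(J_t)}$ contribution after Cauchy--Schwarz and summation over unit intervals. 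Gr\"onwall's inequality then bounds $\|v(t)\theta_y\|^2+\mu\int_0^t\|\theta_y\p_xv\|^2$, and taking the supremum over $y$ --- legitimate because all constants are translation-independent --- gives~\eqref{3.5}.

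Finally, under the extra hypothesis $a\in L^\infty(J_T\times\R)$ I would obtain~\eqref{3.6} by parabolic smoothing. Writing the Duhamel representation of $\p_xv$ with the localised heat semigroup $S(\cdot)$ and using the smoothing bound $\|\p_xS(\tau)\|_{L_{\rm ul}^2\to L_{\rm ul}^2}\le C\tau^{-1/2}$, I would estimate each term in $L^p(J_t,L_{\rm ul}^2)$ by Young's convolution inequality, the source yielding $\int_0^t\|c(r)\|_{L_{\rm ul}^2}dr$ and the advection term $a\p_xv$ being absorbed using the smallness of $\int_0^\delta\tau^{-1/2}d\tau$ on short intervals followed by iteration; this is where the dependence of $C_1$ on $\|a\|_{L^\infty}$ and $\|b\|_{L^2(J_T,L_{\rm ul}^2)}$ enters. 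I expect the main obstacles to be exactly the two features forced by the absence of decay at infinity: justifying comparison for bounded solutions in~\eqref{3.4}, and, in~\eqref{3.6}, converting the space-uniform, time-$L^2$ gradient bound coming from~\eqref{3.5} into a time-$L^p$ bound against the singular kernel $\tau^{-1/2}$, which is what restricts the exponent to $p<\tfrac43$.
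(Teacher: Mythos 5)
Your overall strategy coincides with the paper's: the maximum principle for~\eqref{3.4}, a weighted energy estimate with an exponential weight for~\eqref{3.5}, a Duhamel/heat-kernel smoothing argument for~\eqref{3.6}, and existence/uniqueness obtained from these a priori bounds by approximation. There is, however, one step that does not deliver the inequality as stated: your primary treatment of the zeroth-order term in~\eqref{3.5}. The splitting $\int|b|\,v^2\theta_y^2\le\tfrac12\|b\|_{L_{\rm ul}^2}^2\|v\theta_y\|^2+C\|v\|_{L^\infty}^2$ leaves, after Gr\"onwall, an additive remainder of order $\sqrt{t}\,e^{C(\bar a+\bar b)}\|v\|_{L^\infty(J_t\times\R)}$ on the right-hand side, and this cannot be absorbed into $C(\|v_0\|_{L_{\rm ul}^2}+\|c\|_{L_{\rm ul}^2L^2(J_t)})$: by~\eqref{3.4} the quantity $\|v\|_{L^\infty}$ is controlled only by $\|v_0\|_{L^\infty}$ and $\|c\|_{L^1(J_t,L^\infty)}$, and a bounded initial datum consisting of a tall narrow spike has arbitrarily small $L_{\rm ul}^2$ norm while $\|v_0\|_{L^\infty}$ stays of order one. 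So as written you prove a strictly weaker estimate. The route you mention only in parentheses --- the one-dimensional interpolation $\|w\|_{L^\infty(I)}^2\le C\|w\|_{L^2(I)}\|w\|_{H^1(I)}$ applied to $w=v\theta_y$ on unit intervals, which bounds $\int|b|\,v^2\theta_y^2$ by $\tfrac{\mu}{4}\|\theta_y\p_xv\|^2+C\|b\|_{L_{\rm ul}^2}^2\|v\theta_y\|^2$ --- is in fact the necessary one. This is not a cosmetic point: the paper's observation following Proposition~\ref{p3.1} insists that~\eqref{3.5} holds for $b$ merely in $L^2(J_T,L_{\rm ul}^2)$ with no $L^\infty$ information on $b$, $c$, or $v$, and this is used in the proof of Theorem~\ref{t3.2}; your primary route, which feeds~\eqref{3.4} into the energy estimate, would not survive there.

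Two smaller remarks. For~\eqref{3.6} the paper uses the smoothing rate $\|\p_x(K_t*g)\|_{L_{\rm ul}^2}\le Ct^{-3/4}\|g\|_{L_{\rm ul}^2}$, and it is this exponent --- through the initial-data term $t^{-3/4}\|v_0\|_{L_{\rm ul}^2}$ and the Volterra kernel $(t-r)^{-3/4}$ --- that produces the restriction $p<\tfrac43$. Your rate $\tau^{-1/2}$ does hold on $L_{\rm ul}^2$ (split $g$ into restrictions to unit intervals and use the $L^2\to L^2$ bound for the nearby pieces), but it would give $p<2$, so your closing diagnosis of where $\tfrac43$ comes from matches neither your own argument nor the paper's; this only means you would prove something slightly stronger. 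Finally, be careful not to feed~\eqref{3.5} directly into the $a\p_xv$ term of the Duhamel formula: \eqref{3.5} controls $\sup_y\int_0^T\|\p_xv\|_{L^2(I_y)}^2\,dt$, with the supremum outside the time integral, not $\int_0^T\|\p_xv\|_{L_{\rm ul}^2}^2\,dt$; that term must indeed be handled self-referentially by absorption and iteration, as you indicate.
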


\begin{proof}
Inequality~\eqref{3.4} is nothing else but the maximum principle, while~\eqref{3.5} can easily be obtained on multiplying~\eqref{3.2} by $e^{-|x-y|}v$, integrating over~$x\in\R$, and taking the supremum over~$y\in\R$. Once these a priori estimates are established (by a formal computation), the existence and uniqueness of a solution in the required functional classes can be proved by usual arguments (e.g., see~\cite{lr2002} for the more complicated case of the Navier--Stokes equations), and we omit them. The only non-standard point is inequality~\eqref{3.6}, and we now briefly outline its proof. 

Let $K_t(x)$ be the heat kernel on the real line:
\begin{equation} \label{3.00}
K_t(x)=\frac{1}{\sqrt{4\pi\mu t}}\,\exp\bigl(-\tfrac{x^2}{4\mu t}\bigr), 
\quad x\in\R, \quad t>0.
\end{equation}
The following estimates are easy to check:
\begin{equation} \label{3.7}
\|K_t*g\|_{L_{\rm ul}^2}\le \|g\|_{L_{\rm ul}^2}, \quad 
\|\p_x(K_t*g)\|_{L_{\rm ul}^2}\le C_1t^{-\frac34}\|g\|_{L_{\rm ul}^2}, \quad t>0.
\end{equation}
Here and henceforth, the constants~$C_i$ in various inequalities may depend on~$\mu$ and~$T$. We now use the Duhamel formula to write a solution of~\eqref{3.2}, \eqref{3.3} in the form
$$
v(t,x)=(K_t*v_0)(x)+\int_0^tK_{t-r}*\bigl(c(r)-a\p_xv(r)-b v(r)\bigr)\,dr. 
$$
It follows from~\eqref{3.7} that
\begin{align*}
\|v(t)\|_{H_{\rm ul}^1}
&\le C_1t^{-\frac34}\|v_0\|_{L_{\rm ul}^2}
+C_2\int_0^t(t-r)^{-\frac34}\bigl(\|c\|_{L_{\rm ul}^2}
+\|a\|_{L^\infty}\|v\|_{H_{\rm ul}^1}+\|b\|_{L_{\rm ul}^2}\|v\|_{L^\infty}\bigr)\,dr\\ 
&\le C_1t^{-\frac34}\|v_0\|_{L_{\rm ul}^2}
+C_2\int_0^t(t-r)^{-\frac34}\bigl(\|c\|_{L_{\rm ul}^2}
+\bigl(\|a\|_{L^\infty}+1\bigr)\,\|v\|_{H_{\rm ul}^1}\bigr)dr\\
&\qquad+C_3\int_0^t(t-r)^{-\frac34}\|b\|_{L_{\rm ul}^2}^2\|v\|_{L_{\rm ul}^2}\,dr\,,
\end{align*}
where we used the interpolation inequality 
$\|v\|_{L^\infty}^2\le C\|v\|_{L_{\rm ul}^2}\|v\|_{H_{\rm ul}^1}$. 
Taking the left- and right-hand sides of this inequality to the~$p^{\text{th}}$ power, integrating in time, and using~\eqref{3.5}, after some simple transformations  we obtain the following differential inequality for the increasing function $\varphi(t)=\int_0^t\|v(r)\|_{H_{\rm ul}^1}^pdr$:
$$
\varphi(t)\le C_4 Q^p+C_4\Bigl(\int_0^t\|c(r)\|_{L_{\rm ul}^2}dr\Bigr)^p
+C_4\bigl(\|a\|_{L^\infty(J_t\times\R)}^p+1\bigr)\int_0^t(t-r)^{-\frac34}\varphi(r)\,dr,
$$
where $Q$ stands for the expression in the brackets on the right-hand side of~\eqref{3.6}, and~$C_4$ depends on $\bar a(T)$, $\bar b(T)$, $T$, and~$\mu$. A Gronwall-type argument enables one to derive~\eqref{3.6}. 
\end{proof}

Let us note that inequality~\eqref{3.5} does not use the fact that $b,c\in L^1(J_T,L^\infty)$ and remains valid for any coefficient $b\in L^2(J_T,L_{\rm ul}^2)$ and any right-hand side~$c$ for which $\|c\|_{L_{\rm ul}^2L^2(J_T)}<\infty$. This observation will be important in the proof of Theorem~\ref{t3.2}. 

\smallskip
We now turn to the Burgers equation~\eqref{3.1}, supplemented with the initial condition~\eqref{1.2}. The proof of the following result is carried out by standard arguments, and we only sketch the main ideas.

\begin{theorem} \label{t3.2}
Let  $f\in L^1(J_T,L^\infty)$ and $g\in L^\infty(J_T\times\R)\cap L^2(J_T,W^{1,\infty})\cap L^1(J_T,W^{2,\infty})$ for some $T>0$ and let $u_0\in L^\infty$. Then problem~\eqref{3.1}, \eqref{1.2} has a unique solution $u(t,x)$ such that
\begin{equation} \label{3.9}
u\in L^\infty(J_T\times\R)\cap C_*(J_T,L_{\rm ul}^2)\cap L^p(J_T,H_{\rm ul}^1), 
\quad \|\p_xu(\cdot,x)\|_{L^2(J_T)}\in L_{\rm ul}^2,
\end{equation}
where $p\in[1,\frac43)$ is arbitrary. 
Moreover, the mapping $(u_0,f,g)\mapsto u$ is uniformly Lipschitz continuous (in appropriate spaces) on every ball. 
\end{theorem}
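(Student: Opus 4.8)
The plan is to reduce the nonlinear problem to the linear theory of Proposition~\ref{p3.1} by a standard iteration with uniform a~priori bounds, and then to establish uniqueness and Lipschitz continuity through a weighted energy estimate for the difference of two solutions. To set up the iteration I would expand $\BB(u+g)=(u+g)\p_x u+(u+g)\p_x g$ and move the terms depending only on~$g$ to the right-hand side, so that~\eqref{3.1} becomes
\begin{equation*}
\p_t u-\mu\p_x^2 u+(u+g)\p_x u+(\p_x g)\,u=f+\mu\p_x^2 g-g\p_x g .
\end{equation*}
This has the form~\eqref{3.2} with $a=u+g$, $b=\p_x g$, and $c=f+\mu\p_x^2 g-g\p_x g$; the hypotheses on~$g$ give $b\in L^2(J_T,L^\infty)$ and $c\in L^1(J_T,L^\infty)$, and both are \emph{independent} of~$u$. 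Letting $u^{(n+1)}$ solve this equation with $a=u^{(n)}+g$ and initial data~$u_0$, Proposition~\ref{p3.1} produces a sequence in the class~\eqref{3.9}. Because~$b$ and~$c$ do not involve the iterate, the maximum principle~\eqref{3.4} gives $\|u^{(n+1)}\|_{L^\infty(J_T\times\R)}\le e^{\|\p_x g\|_{L^1(J_T,L^\infty)}}\bigl(\|u_0\|_{L^\infty}+\|c\|_{L^1(J_T,L^\infty)}\bigr)$ uniformly in~$n$; hence $a=u^{(n)}+g$ is bounded in $L^\infty(J_T\times\R)$, and~\eqref{3.5} and~\eqref{3.6} (whose applicability requires precisely $a\in L^\infty(J_T\times\R)$) yield uniform bounds on $\|u^{(n)}(t)\|_{L_{\rm ul}^2}+\|\p_x u^{(n)}\|_{L_{\rm ul}^2 L^2(J_t)}$ and on the $L^p(J_T,H_{\rm ul}^1)$ norm for every $p\in[1,\tfrac43)$; one may use the embedding $L^\infty(\R)\hookrightarrow L_{\rm ul}^2$ in place of~\eqref{3.5} for the $L_{\rm ul}^2$ part so as to avoid the $L_{\rm ul}^2L^2$ norm of~$c$.

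Next I would pass to the limit. The uniform bounds provide a subsequence converging weakly-$*$ in $L^\infty(J_T\times\R)$ and weakly in $L^p(J_T,H_{\rm ul}^1)$; combining the $H^1_{\rm loc}$ bound with a bound on $\p_t u^{(n)}$ in a negative-order local space and an Aubin--Lions argument on each window $[-R,R]$ upgrades this to strong convergence in $L^2_{\rm loc}(J_T\times\R)$. Strong local convergence is exactly what is needed to pass to the limit in the quadratic term, written in divergence form $\tfrac12\p_x\bigl((u^{(n)}+g)^2\bigr)$. The limit~$u$ then solves~\eqref{3.1},~\eqref{1.2} and lies in~\eqref{3.9} by lower semicontinuity of the norms, while the $C_*$~continuity in time and the $H^s_{\rm loc}$~limit as $t\to0^+$ are inherited from the linear theory together with parabolic smoothing.

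For uniqueness and Lipschitz continuity, let $u_1,u_2$ be two solutions with data $(u_{0,i},f_i,g_i)$, set $W_i=u_i+g_i$, $w=u_1-u_2$, $\delta g=g_1-g_2$, $\delta f=f_1-f_2$, and subtract the equations, keeping the Burgers term in divergence form:
\begin{equation*}
\p_t w-\mu\p_x^2 w+\tfrac12\p_x\bigl((W_1+W_2)(w+\delta g)\bigr)=\delta f+\mu\p_x^2\delta g .
\end{equation*}
Multiplying by $e^{-|x-y|}w$, integrating over~$x$, and taking the supremum over~$y$ (exactly as in the derivation of~\eqref{3.5}), I would integrate the nonlinear term by parts so that the derivative falls on the weighted test function; the resulting factor $\p_x w$ is absorbed into the viscous term $\mu\|\p_x w\|^2$ by Young's inequality, leaving coefficients that involve only $\|W_1+W_2\|_{L^\infty}^2$, which is integrable in~$t$. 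A Gronwall argument then bounds $\sup_t\|w(t)\|_{L_{\rm ul}^2}$ by the data differences, measured in $L^\infty$ for~$u_{0,i}$, in $L^1(J_T,L^\infty)$ for~$f_i$, and in the $g$-norm of the theorem; taking $\delta g=\delta f=0$ and equal initial data gives $w\equiv0$, i.e.\ uniqueness, and these are the ``appropriate spaces'' in which the map $(u_0,f,g)\mapsto u$ is Lipschitz on balls.

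The step I expect to be the main obstacle is this last difference estimate. The Burgers nonlinearity loses a derivative, whereas the solution carries only the weak gradient regularity $\p_x u\in L_{\rm ul}^2L^2(J_T)$ together with $u\in L^p(J_T,H_{\rm ul}^1)$ for $p<\tfrac43$, which is \emph{below} the $L^2$-in-time integrability one would need in order to treat $\p_x u_i$ as a coefficient (as in the hypothesis underlying~\eqref{3.5}). The way around this is precisely to keep the nonlinearity in divergence form and to spend the derivative of the difference on the dissipation, so that the estimate uses only the $L^\infty$ bound on $u_i+g_i$ and never the derivatives of the~$u_i$. The local strong compactness needed to pass to the limit in the nonlinear term is a secondary technical point, routine once the uniform bounds are in hand.
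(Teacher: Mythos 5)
Your proposal is correct in substance and shares the paper's skeleton: both reduce \eqref{3.1} to the linear equation \eqref{3.2} with exactly the same decomposition $a=u+g$, $b=\p_x g$, $c=f+\mu\p_x^2g-g\p_xg$, obtain the uniform $L^\infty$ bound from the maximum principle \eqref{3.4} and the remaining bounds from \eqref{3.5}--\eqref{3.6}, and then prove uniqueness and Lipschitz continuity through a weighted $e^{-|x-y|}$ energy estimate for the difference. You deviate in two places. For existence, the paper invokes a fixed-point (contraction) argument for local solvability plus the a priori bounds to exclude blowup, whereas you run the linearization iteration and pass to the limit by Aubin--Lions; this works, but as written it has the usual wrinkle that a subsequence along which $u^{(n_k)}$ converges need not be one along which $u^{(n_k+1)}$ converges to the \emph{same} limit, so you should either upgrade your difference estimate to a contraction estimate for consecutive iterates on a short time interval (which identifies the limit of the full sequence) or apply compactness to a one-parameter regularization instead. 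For the difference estimate, the paper does \emph{not} avoid the derivatives of the solutions: it puts $v=u_1-u_2$ into the linear equation with $a=u_1+g_1$ and $b=\p_x(u_2+g_2)$, and uses the remark following Proposition~\ref{p3.1} that \eqref{3.5} only requires $b\in L^2(J_T,L_{\rm ul}^2)$ --- a bound supplied by \eqref{3.8} via $\|\p_xu\|_{L_{\rm ul}^2L^2(J_T)}\le C(R)$ --- together with \eqref{3.17} to reach \eqref{3.15}. So the obstacle you single out is not actually an obstacle for the paper's route: the relevant integrability for the coefficient $b$ is $L^2(J_T,L_{\rm ul}^2)$, not $L^2(J_T,L^\infty)$, and it is available. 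Your divergence-form trick, spending $\p_x w$ on the dissipation so that only $\|u_i+g_i\|_{L^\infty}$ enters, is a clean and somewhat more self-contained alternative that never touches the gradient bounds on the $u_i$; what it costs is nothing essential, though the paper's version also delivers the $\|\p_xv\|_{L_{\rm ul}^2L^2(J_t)}$ term on the left of \eqref{3.15}, which is used later (e.g.\ in Proposition~\ref{p3.4}). With the subsequence point tightened, your argument is a valid proof of Theorem~\ref{t3.2}.
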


\begin{proof}
To prove the existence, we first derive some a priori estimates for a solution, assuming that it exists. Let us assume that the functions~$u_0$, $f$, and~$g$ belong to the balls of radius~$R$ centred at zero in the corresponding spaces. If a function~$u$ satisfies~\eqref{3.1}, then it is a solution of the linear equation~\eqref{3.2} with 
$$
a=u+g, \quad b=\p_xg, \quad c=f+\mu\p_x^2g-g\p_xg. 
$$
It follows from~\eqref{3.4} that 
\begin{equation} \label{3.10}
\|u\|_{L^\infty(J_T\times\R)}\le C_1(R). 
\end{equation}
Inequalities~\eqref{3.5} and~\eqref{3.6} now imply that
\begin{equation} \label{3.8}
\|u\|_{L^\infty(J_T,{L_{\rm ul}^2})}+\|u\|_{L^p(J_T,H_{\rm ul}^1)}
+\|u\|_{H_{\rm ul}^1L^2(J_T)}\le C_2(R). 
\end{equation}
We have thus established some bounds for the norm of a solution in the spaces entering~\eqref{3.9}. The local existence of a solution can now be proved by a fixed point argument, whereas the absence of finite-time blowup follows from the above a priori estimates. 

Let us prove a Lipschitz property for the resolving operator, which will imply, in particular, the uniqueness of a solution. Assume that $u_i$, $i=1,2$, are two solutions corresponding to some data $(u_{0i},f_i,g_i)$ that belong to balls of radius~$R$ centred at zero in the corresponding spaces. Setting $v=u_1-u_2$, $f=f_1-f_2$, $g=g_1-g_2$, and~$v_0=u_{01}-u_{02}$, we see that~$v$ satisfies~\eqref{3.2}, \eqref{3.3} with 
$$
a=u_1+g_1, \quad b=\p_x(u_2+g_2), \quad 
c=f+\mu\p_x^2g-(u_1+g_1)\p_xg-g\p_x(u_2+g_2). 
$$
Multiplying Eq.~\eqref{3.2} by $e^{-|x-y|}v$, integrating in $x\in\R$, and using~\eqref{3.10} and~\eqref{3.8}, after some transformations we obtain
\begin{equation} \label{3.12}
\p_t\|v\|_y^2+\mu\|\p_xv\|_y^2\le C_3(R)\|v\|_y^2+2\|c\|_y\|v\|_y,
\end{equation}
where we set 
$$
\|w\|_y^2=\int_\R w^2(x)e^{-|x-y|}dx. 
$$
Application of a Gronwall-type argument implies that
\begin{equation} \label{3.13}
\|v(t)\|_y^2+\int_0^t \|\p_xv\|_y^2\,ds
\le C_4(R)\Bigl(\|v_0\|_y+\int_0^t\|c(s)\|_y\,ds\Bigr)^2. 
\end{equation}
Taking the square root and the supremum in $y\in\R$, we derive
\begin{equation} \label{3.14}
\|v\|_{L^\infty(J_t,L_{\rm ul}^2)}+\|\p_xv\|_{L_{\rm ul}^2L^2(J_t)}
\le C_5(R)\Bigl(\|v_0\|_{L_{\rm ul}^2}+\sup_{y\in\R}\int_0^t\|c(s)\|_y\,ds\Bigr).
\end{equation}
Now note that 
\begin{equation} \label{3.17}
\|c\|_y\le \|f\|_y+\mu\|\p_x^2g\|_y+\|u_1+g_1\|_{L^\infty}\|\p_xg\|_y+\|g\|_{L^\infty}\|\p_xu_2+\p_xg_2\|_y,
\end{equation}
whence it follows that
$$
\int_0^t\|c\|_y\,ds\le \|f\|_{L_{\rm ul}^2L^2(J_t)}
+C_6(R)\bigl(\|\p_x^2g\|_{L^1(J_t,L^\infty)}+\|\p_xg\|_{L^2(J_t,L^\infty)}+\|g\|_{L^2(J_t,L^\infty)}\bigr). 
$$
Substituting this inequality in~\eqref{3.14}, we obtain
\begin{equation} \label{3.15}
\|v\|_{L^\infty(J_t,L_{\rm ul}^2)}+\|\p_xv\|_{L_{\rm ul}^2L^2(J_t)}
\le C_8(R)\Bigl(\|v_0\|_{L_{\rm ul}^2}+\|f\|_{L_{\rm ul}^2L^2(J_t)}+|\!|\!|g|\!|\!|_t\Bigr),
\end{equation}
where we set 
$$
|\!|\!|g|\!|\!|_t=\|g\|_{L^1(J_t,W^{2,\infty})}+\|g\|_{L^2(J_t,W^{1,\infty})}.
$$
Inequality~\eqref{3.15} establishes the required Lipschitz property of the resolving operator. 
\end{proof}

\begin{remark} \label{r3.1}
An argument similar to that used in the proof of Theorem~\ref{t3.2}  enables one to estimate the $H_{\rm ul}^1$-norm of the difference between two solutions. Namely, let~$u_i(t,x)$, $i=1,2$, be two solutions of~\eqref{3.1}, \eqref{1.2} corresponding to some data
$$
(u_{0i},f_i,g_i)\in H_{\rm ul}^1\times L^2(J_T,L^\infty)\times L^\infty(J_T,W^{2,\infty}),
\quad i=1,2,
$$
whose norms do not exceed~$R$. Then the difference $v=u_1-u_2$ satisfies the inequality
\begin{equation} \label{3.33}
\|v\|_{L^\infty(J_T,H_{\rm ul}^1)}
\le C(R)\Bigl(\|v_0\|_{H_{\rm ul}^1}+\|f\|_{L^2(J_T,L_{\rm ul}^2)}
+\|g\|_{L^4(J_T,W^{2,\infty})}\Bigr),
\end{equation}
where we retained the notation used in the proof of~\eqref{3.15}. 
\end{remark}

Finally, the following proposition establishes a higher regularity of solutions for~\eqref{3.1} with $g\equiv0$, provided that the right-hand side is sufficiently regular. 

\begin{proposition} \label{p3.3}
Under the hypotheses of Theorem~\ref{t3.2}, assume that $f\in L^2(J_T,H_{\rm ul}^s)$ for an integer $s\ge1$ and $g\equiv0$. Then the solution~$u(t,x)$ constructed in Theorem~\ref{t3.2} belongs to~$C([\tau,T],H_{\rm ul}^s)$ for any $\tau>0$ and satisfies the inequality
\begin{equation} \label{3.16}
\sup_{t\in J_T}\bigl(t^{k}\|\p_x^ku(t)\|_{L_{\rm ul}^2}^2\bigr)+\sup_{y\in\R}\int_0^Tt^k\|\p_x^{k+1}u(t)\|_{L^2(I_y)}^2dt\le Q_k\bigl(\|u_0\|_{L^\infty}+\|f\|_{L^2(J_T,H_{\rm ul}^k\cap L^\infty)}\bigr), 
\end{equation}
where $0\le k\le s$, $I_y=[y,y+1]$, and $Q_{k}$ is an increasing function. Furthermore, if $u_0\in C_b^\infty$, then the solution belongs to $C(J_T,H_{\rm ul}^s)$, and inequality~\eqref{3.16} is valid without the factor of~$t^{k/2}$ on the left-hand side and~$\|u_0\|_{L^\infty}$ replaced by~$\|u_0\|_{H_{\rm ul}^k}$ on the right-hand side.
\end{proposition}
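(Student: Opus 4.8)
The plan is to argue by induction on $k$, establishing~\eqref{3.16} for $w_k:=\partial_x^k u$ by means of weighted energy estimates carried out with the combined weight $t^k e^{-|x-y|}$ and followed by taking the supremum over $y\in\R$. The base case $k=0$ is contained in Theorem~\ref{t3.2}: the estimates~\eqref{3.10} and~\eqref{3.8} already provide the control of $\|u\|_{L^\infty(J_T\times\R)}$ and of $u$ in $L^\infty(J_T,L_{\rm ul}^2)$ together with $\partial_x u$ in $L_{\rm ul}^2L^2(J_T)$. As usual, the computations below are first performed on a smooth (Galerkin or mollified) approximation and then passed to the limit on the solution of Theorem~\ref{t3.2}, the limit being justified by the Lipschitz continuity of the resolving operator.

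For the inductive step, assume~\eqref{3.16} holds up to order $k-1$. Differentiating~\eqref{3.1} with $g\equiv0$ exactly $k$ times in $x$ yields
\begin{equation*}
\partial_t w_k-\mu\partial_x^2 w_k+\tfrac12\,\partial_x^{k+1}(u^2)=\partial_x^k f.
\end{equation*}
I would multiply this identity by $t^k e^{-|x-y|}w_k$, integrate over $x\in\R$, and write $\rho_y(x)=e^{-|x-y|}$ and $\|w\|_y^2=\int_\R w^2\rho_y\,dx$. The time-derivative term gives $\tfrac12\partial_t\bigl(t^k\|w_k\|_y^2\bigr)$ minus $\tfrac{k}{2}\,t^{k-1}\|w_k\|_y^2$; upon integration in $t$ this last contribution is precisely of the form $\int_0^T t^{k-1}\|\partial_x^k u\|^2\,dt$ controlled by the inductive hypothesis at order $k-1$. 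The diffusion term produces the good quantity $\mu\,t^k\|\partial_x w_k\|_y^2$ together with a commutator with the weight; since $|\partial_x\rho_y|\le\rho_y$, this commutator is dominated by $\tfrac{\mu}{2}\,t^k\|\partial_x w_k\|_y^2+C\,t^k\|w_k\|_y^2$, so half of the dissipation survives. The forcing contributes $t^k\int\partial_x^k f\,w_k\,\rho_y\,dx\le t^k\|\partial_x^k f\|_y\|w_k\|_y$, which is where the $H_{\rm ul}^k$-norm of $f$ enters.

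The decisive point is the nonlinear term $\tfrac12\int\partial_x^{k+1}(u^2)\,w_k\,\rho_y\,dx$, which I would integrate by parts once and expand by the Leibniz rule. Its top-order contribution reduces, after one more integration by parts, to $\tfrac12\int w_k^2\,\partial_x(u\rho_y)\,dx$: the part containing $\partial_x\rho_y$ is bounded by $\|u\|_{L^\infty}\|w_k\|_y^2$ by virtue of the maximum principle~\eqref{3.10}, while the genuinely cubic part $\tfrac12\int w_k^2\,\partial_x u\,\rho_y\,dx$ is the crux. I would control it through the local interpolation inequality $\|v\|_{L^\infty}^2\le C\|v\|_{L_{\rm ul}^2}\|v\|_{H_{\rm ul}^1}$ of Proposition~\ref{p3.1} (equivalently, a Gagliardo--Nirenberg inequality) together with Young's inequality, absorbing a small multiple of the surviving dissipation $\mu\,t^k\|\partial_x w_k\|_y^2$ and leaving a remainder whose coefficient, thanks to the inductive hypothesis and the factors $t^k$, is integrable in $t$ over $J_T$. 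The lower-order Leibniz terms — products of two derivatives of order $\le k-1$ paired against $\partial_x w_k$ or $w_k$ — are handled analogously, absorbing part of the dissipation and bounding the rest by the estimates already available at orders $\le k-1$. A Gronwall argument for $t\mapsto\sup_y t^k\|w_k\|_y^2$, combined with the norm equivalence $\sup_y\|\cdot\|_y^2\sim\|\cdot\|_{L_{\rm ul}^2}^2$, then yields~\eqref{3.16}. I expect this estimation of the nonlinearity — and in particular keeping the borderline cubic contribution at $k=1$ subcritical through a careful accounting of the $t$-weights — to be the principal technical obstacle.

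Finally, the continuity statement $u\in C([\tau,T],H_{\rm ul}^s)$ follows from~\eqref{3.16} together with the bound on $\partial_t u$ read off from the equation, via a standard interpolation/Aubin--Lions argument (or directly from the Duhamel representation and the smoothing estimates~\eqref{3.7}). When $u_0\in C_b^\infty$, one has $u_0\in H_{\rm ul}^k$ for every $k$, so the smoothing factor is unnecessary: I would repeat the same energy estimates with the plain weight $e^{-|x-y|}$, in which case the term $-\tfrac{k}{2}\,t^{k-1}\|w_k\|_y^2$ does not arise and the initial value $\|\partial_x^k u_0\|_{L_{\rm ul}^2}\le\|u_0\|_{H_{\rm ul}^k}$ enters the Gronwall inequality directly. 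This gives continuity up to $t=0$ and the stated form of~\eqref{3.16} without the factors $t^{k}$ and with $\|u_0\|_{L^\infty}$ replaced by $\|u_0\|_{H_{\rm ul}^k}$.
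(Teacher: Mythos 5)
Your overall skeleton --- induction on $k$, energy estimates with the combined weight $t^k e^{-|x-y|}$, supremum over $y\in\R$, Gronwall --- is exactly the paper's, as are the treatment of the initial layer ($k=0$ base case from~\eqref{3.8}, dropping the $t^k$ factors when $u_0\in C_b^\infty$) and the concluding continuity argument. The divergence, and the gap, is precisely at the point you yourself flag as the principal obstacle: the nonlinear term. Your plan is to isolate the cubic contribution $\int w_k^2\,\p_x u\,e^{-|x-y|}\,dx$, estimate the relevant derivative of $u$ in $L^\infty$ via $\|v\|_{L^\infty}^2\le C\|v\|_{L_{\rm ul}^2}\|v\|_{H_{\rm ul}^1}$, and absorb the excess into the dissipation $\mu\,t^k\|\p_x w_k\|_y^2$. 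This does not close in the uniformly local setting, at least for $k=1,2$: the interpolation inequality produces $\sup_{y}\|\cdot\|_{L^2(I_y)}$ of a derivative of top order at each fixed time, whereas the only control available --- both the dissipation term you want to absorb into and the inductive bound from the previous step --- is of the form $\sup_y\int_0^T t^{k}\|\cdot\|_{L^2(I_y)}^2\,dt$, and $\int_0^T\sup_y$ is not dominated by $\sup_y\int_0^T$. So the coefficient you claim is ``integrable in $t$'' is not, with the information available at that stage of the induction, and one cannot absorb a $\sup_y$ quantity into a fixed-$y$ dissipation term before the supremum has been taken.

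The paper avoids this entirely. It writes $\p_x^l(u\p_xu)=\tfrac12\p_x^{l+1}(u^2)$, integrates by parts once, and bounds the nonlinear contribution at fixed $y$ by Cauchy--Schwarz as $\tfrac12\bigl(\|\p_x^{l+1}u\|_y+\|\p_x^{l}u\|_y\bigr)\|\p_x^l(u^2)\|_y$, so that no $L^\infty$ bound on derivatives of $u$ is ever needed. After Young's inequality and integration in time one is left with $\sup_y\int_0^t s^l\|\p_x^l(u^2)\|_y^2\,ds$, which by the product estimate $\|u^2\|_{H^l(I)}\le C\|u\|_{L^\infty}\|u\|_{H^l(I)}$ is controlled by $C\|u\|_{L^\infty}^2\sup_y\sum_{k\in\Z}e^{-|k-y|}\int_0^t s^l\|u\|_{H^l(I_k)}^2\,ds$; the top-order part of this, $\int_0^t s^l\|\p_x^l u\|_{L^2(I_k)}^2\,ds\le T\int_0^t s^{l-1}\|\p_x^l u\|_{L^2(I_k)}^2\,ds$, is exactly the dissipation integral furnished by the induction hypothesis at order $l-1$: the extra power of $s$ gained at each step is what renders the top-order nonlinear term harmless, with only the maximum principle~\eqref{3.10} as input. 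Replace your interpolation/absorption step by this mechanism and the rest of your outline goes through.
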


\begin{proof}
We confine ourselves to the derivation of the a priori estimate~\eqref{3.16} for $u_0\in L^\infty$. Once it is proved, the regularity of a solution can be obtained by standard arguments. Furthermore, the case when $u_0\in C_b^\infty$ can be treated by a similar, but simpler technique, and we omit it. 

\smallskip
The proof of~\eqref{3.16} is by induction on~$k$. For $k=0$, inequality~\eqref{3.16} is a consequence of~\eqref{3.8}. We now assume that $l\in[1,s]$ and that~\eqref{3.16} is established for all $k\le l-1$. Let us set
$$
\varphi_y(t)=t^l\int_\R e^{-\langle x-y\rangle}|\p_x^{l}u|^2dx=t^l\|\p_x^{l}u\|_y^2,
\quad y\in \R,
$$
where $\langle z\rangle=\sqrt{1+z^2}$. In view of~\eqref{3.1}, the derivative of~$\varphi_y$ can be written as
\begin{equation} \label{3.31}
\p_t\varphi_y(t)=l\,t^{l-1}\|\p_x^{l}u\|_y^2
+2t^l\int_\R e^{-\langle x-y\rangle}\p_x^lu\,\p_x^l(\p_x^2u-u\p_xu+f)\,dx. 
\end{equation}
Integrating by parts and using~\eqref{3.10} and the Cauchy--Schwarz inequality, we derive
\begin{align*}
\int_\R e^{-\langle x-y\rangle}\p_x^lu\,\p_x^{l+2}u\,dx
&\le -\|\p_x^{l+1}u\|_y^2+\|\p_x^{l+1}u\|_y\,\|\p_x^{l}u\|_y,\\
\int_\R e^{-\langle x-y\rangle}\p_x^lu\,\p_x^lf\,dx
&\le \|\p_x^{l}f\|_y\,\|\p_x^{l}u\|_y,\\
\int_\R e^{-\langle x-y\rangle}\p_x^lu\,\p_x^{l}(u\p_xu)\,dx
&\le \frac12\int_\R e^{-\langle x-y\rangle}\p_x^lu\,\p_x^{l+1}u^2\,dx\\
&\le \frac12\bigl(\|\p_x^{l+1}u\|_y+\|\p_x^{l}u\|_y\bigr)\|\p_x^lu^2\|_y. 
\end{align*}
Substituting these inequalities into~\eqref{3.31} and integrating in time, we obtain
$$
\varphi_y(t)+\int_0^t t^l\|\p_x^{l+1}u\|_y^2\,dt
\le \int_0^t\bigl(s^{l-1}\|\p_x^{l}u\|_y^2+4\varphi_y(s)+s^l\|\p_x^lu^2\|_y^2
+s^l\|\p_x^lf\|_y^2\bigr)\,ds.
$$
Taking the supremum over $y\in\R$ and using the induction hypothesis, we derive 
\begin{equation} \label{3.32}
\psi(t)\le Q_{l-1}+C_1\int_0^t\psi(s)\,ds +\sup_{y\in\R}\int_0^ts^l\|\p_x^lu^2\|_y^2ds+C_1\int_0^T\|f\|_{H_{\rm ul}^l}^2ds,
\end{equation}
where $Q_{l-1}$ is the function entering~\eqref{3.16} with $k=l-1$, and 
$$
\psi(t)=t^l\|\p_x^lu(t)\|_{L_{\rm ul}^2}^2+\sup_{y\in\R}\int_0^t t^l\|\p_x^{l+1}u\|_{L^2(I_y)}^2\,dt. 
$$
Now note that
$$
\int_0^ts^l\|\p_x^lu^2\|_y^2\,ds
\le C_2\|u\|_{L^\infty}^2\sum_{k\in\Z}e^{-|k-y|}\int_0^ts^l\|u\|_{H^l(I_k)}^2\,ds. 
$$
Substituting this into~\eqref{3.32} and using again the induction hypothesis and inequality~\eqref{3.10}, we obtain 
$$
\psi(t)\le C_3\int_0^t\psi(s)\,ds
+Q\bigl(\|u_0\|_{L^\infty}+\|f\|_{L^2(J_T,H_{\rm ul}^l\cap L^\infty)}\bigr),
$$
where $Q$ is an increasing function. Application of the Gronwall inequality completes the proof. 
\end{proof}

\subsection{Uniform continuity of the resolving operator in local norms}
\label{s3.3}

Theorem~\ref{t3.2} established, in particular, the Lipschitz continuity of the resolving operator for~\eqref{3.1}. The following proposition, which plays a crucial role in the next section, proves the uniform continuity of the resolving operator in local norms. 

\begin{proposition} \label{p3.4}
Under the hypotheses of Theorem~\ref{t3.2}, for any positive numbers~$T$, $R$, $r$, and~$\delta$ there are~$\rho$ and~$C$ such that, if triples $(u_{0i},f_i,g_i)$, $i=1,2$, satisfy the inclusions
$$
u_{0i}\in L^\infty,\quad f_i\in L^1(J_T,L^\infty), \quad g_i\in L^\infty(J_T\times\R)\cap L^2(J_T,W^{1,\infty})\cap L^1(J_T,W^{2,\infty}),
$$
and  corresponding norms are bounded by~$R$, then 
\begin{multline} \label{3.18}
\sup_{t\in J_T}\|u_1(t)-u_2(t)\|_{L^2([-r,r])}
\le \delta\\
+C\,\Bigl(\|u_{01}-u_{02}\|_{L^2(I_\rho)}+\|f_1-f_2\|_{L^1(J_T,L^2(I_\rho))}
+\|g_1-g_2\|_{L^2(J_T,H^2(I_\rho))}\Bigr),
\end{multline}
where $I_\rho=[-\rho,\rho]$, and $u_i(t)$ denotes the solution of~\eqref{3.1} issued from~$u_{0i}$. 
\end{proposition}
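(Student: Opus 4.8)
The plan is to reduce the estimate to a weighted energy inequality for the difference $v=u_1-u_2$, using a weight localised near $[-r,r]$ and decaying exponentially away from it. As in the proof of Theorem~\ref{t3.2}, $v$ solves the linear equation~\eqref{3.2} with $v_0=u_{01}-u_{02}$ and
\[
a=u_1+g_1,\quad b=\p_x(u_2+g_2),\quad c=f+\mu\p_x^2g-a\,\p_xg-g\,b,
\]
where $f=f_1-f_2$ and $g=g_1-g_2$. By~\eqref{3.10} and the hypotheses, $a$ and $u_2+g_2$ belong to $L^\infty(J_T\times\R)$ with norms $\le C(R)$. I fix a smooth weight $\theta=e^{-\phi}$, where $\phi\ge0$ is $1$-Lipschitz, $\phi\equiv0$ on $[-r,r]$, and $\phi(x)\ge|x|-r-1$; thus $0<\theta\le1$, $\theta\equiv1$ on $[-r,r]$, $|\theta'|\le\theta$, and $\theta(x)\le e\,e^{-(|x|-r)}$ for $|x|\ge r$. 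Writing $\|w\|_\theta^2=\int_\R\theta w^2\,dx$, the target is controlled by $\|v(t)\|_{L^2([-r,r])}\le\|v(t)\|_\theta$, since $\theta\equiv1$ on $[-r,r]$.

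First I would multiply~\eqref{3.2} by $\theta v$ and integrate in $x$. The viscous term produces the good term $\mu\int_\R\theta(\p_xv)^2\,dx$ together with a cross term $\mu\int_\R\theta'v\,\p_xv$, which is absorbed using $|\theta'|\le\theta$ and Young's inequality; the transport term $\int_\R\theta a v\,\p_xv$ is absorbed using $\|a\|_{L^\infty}\le C(R)$. The delicate terms are those containing $b=\p_x(u_2+g_2)$, which is only $L^2$ in~$x$: both $\int_\R\theta b v^2$ and the part $-\int_\R\theta g b\,v$ of the source are treated by integrating by parts to move the derivative off $u_2+g_2$ and onto $\theta v$ (respectively $\theta g v$), after which the bounded factor $u_2+g_2$ is pulled out. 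Besides absorbable multiples of $\|v\|_\theta^2$ and of the good viscous term, this produces a genuine source $C(R)\int_\R\theta(g^2+(\p_xg)^2)\,dx$. Setting $\tilde c=f+\mu\p_x^2g-a\,\p_xg$ for the regular part of the source and discarding the nonnegative viscous term, I obtain
\[
\tfrac{d}{dt}\|v\|_\theta^2\le C(R)\|v\|_\theta^2+2\|\tilde c\|_\theta\|v\|_\theta+C(R)\!\int_\R\theta(g^2+(\p_xg)^2)\,dx,
\]
and a Gronwall-type argument (as for~\eqref{3.13}) yields
\[
\sup_{t\in J_T}\|v(t)\|_\theta\le C(R)\Bigl(\|v_0\|_\theta+\int_0^T\|\tilde c\|_\theta\,dt+\bigl(\textstyle\int_0^T\!\int_\R\theta(g^2+(\p_xg)^2)\,dx\,dt\bigr)^{1/2}\Bigr).
\]

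The final and essential step is to convert each $\theta$-weighted quantity on the right into a local norm on $I_\rho=[-\rho,\rho]$ plus a small remainder. For any integrand I split the integral over $I_\rho$ and over $\{|x|>\rho\}$: on $I_\rho$ one uses $\theta\le1$ to bound by the corresponding $L^2(I_\rho)$ norm, while on $\{|x|>\rho\}$ the integrand is bounded in $L^\infty$ by $C(R)$ (here $\tilde c\in L^1(J_T,L^\infty)$, $g,\p_xg$ lie in the relevant $L^p_tL^\infty$ spaces, and $v_0\in L^\infty$ with $\|v_0\|_{L^\infty}\le2R$), so that $\int_{|x|>\rho}\theta(\cdots)\le C(R)\int_{|x|>\rho}\theta\le C(R)e^{-(\rho-r)}$. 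In this way $\|v_0\|_\theta$, $\int_0^T\|\tilde c\|_\theta\,dt$, and the $g$-source are bounded by $\|v_0\|_{L^2(I_\rho)}$, $\|f\|_{L^1(J_T,L^2(I_\rho))}+C(R)\|g\|_{L^2(J_T,H^2(I_\rho))}$, and $\|g\|_{L^2(J_T,H^2(I_\rho))}$ respectively, in each case up to a remainder $C(R)e^{-(\rho-r)/2}$. Combining these with $\|v(t)\|_{L^2([-r,r])}\le\|v(t)\|_\theta$ gives~\eqref{3.18} once $\rho$ is chosen so large that $C(R)e^{-(\rho-r)/2}\le\delta$.

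I expect the main obstacle to be the simultaneous treatment of the two features distinguishing this from Theorem~\ref{t3.2}: the coefficient $b=\p_x(u_2+g_2)$ is not bounded, which forces the integration-by-parts trick; and the data $v_0,\tilde c,g$ do not decay at infinity, so the far-field contributions can only be controlled through the product of a uniform $L^\infty$ bound with the exponentially small mass $\int_{|x|>\rho}\theta$. Keeping the Gronwall constant $C(R)$ independent of $\rho$ (the decay rate of $\theta$ being fixed, equal to~$1$) is precisely what allows $\rho$ to be selected at the very end to absorb the remainder into $\delta$.
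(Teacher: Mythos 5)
Your proof is correct and follows essentially the same route as the paper: a weighted energy estimate for $v=u_1-u_2$ with an exponentially decaying weight, followed by splitting each weighted norm into its $I_\rho$ part and an exponentially small far-field remainder controlled by the uniform $L^\infty$ bounds, and finally choosing $\rho$ large. The only (inessential) variation is that you integrate by parts to avoid the term $g\,\p_x(u_2+g_2)$, producing a source quadratic in $g$, whereas the paper keeps that term in $c$ (cf.~\eqref{3.17}) and bounds it using the a priori estimate~\eqref{3.8} on $\p_xu_2$ in weighted $L^2$ together with the Sobolev embedding $H^1(I_\rho)\subset L^\infty(I_\rho)$.
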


\begin{proof}
We shall use the notation introduced in the proof of Theorem~\ref{t3.2}. It follows from inequality~\eqref{3.13} with $y=0$ that 
\begin{equation} \label{3.19}
e^{-r/2}\|v(t)\|_{L^2(I_r)}\le C_1(R)\,
\Bigl(\|e^{-|\cdot|/2}v_0\|_{L^2}
+\int_0^T\|e^{-|\cdot|/2}c(t,\cdot)\|_{L^2}\,dt\Bigr).
\end{equation}
Now note that 
\begin{equation} \label{3.20}
\|e^{-|x|/2}v_0\|_{L^2}^2=\int_\R|v_0|^2e^{-|x|}dx\le \|v_0\|_{L^2(I_\rho)}^2+4e^{-\rho}\|v_0\|_{L_{\rm ul}^2}^2. 
\end{equation}
By a similar argument, we check that (cf.~\eqref{3.17})
\begin{multline*}
\|e^{-|\cdot|/2}c(t,\cdot)\|_{L^2}\le \|f\|_{L^2(I_\rho)}+\mu\|\p_x^2g\|_{L^2(I_\rho)}
+C_2(R)\,\|\p_xg\|_{L^2(I_\rho)}+C_3(R)e^{-\rho/2}\\
+\bigl(\|g\|_{L^\infty(I_\rho)}+e^{-\rho/4}\|g\|_{L^\infty}\bigr)\,
\|e^{-|\cdot|/4}(\p_xu_2+\p_xg_2)\|_{L^2(\R)}. 
\end{multline*}
Integrating in time and using~\eqref{3.8}, we obtain
\begin{equation} \label{3.21}
\int_0^T\|e^{-|\cdot|/2}c(t,\cdot)\|_{L^2}\,dt
\le C_4(R)\Bigl\{\int_0^T\|f\|_{L^2(I_\rho)}dt+\Bigl(\int_0^T\|g\|_{H^2(I_\rho)}^2dt\Bigr)^{1/2}+e^{-\rho/4}\Bigr\}.
\end{equation}
Substituting~\eqref{3.20} and~\eqref{3.21} into~\eqref{3.19} and taking~$\rho>0$ sufficiently large, we arrive at the required inequality~\eqref{3.18}.
\end{proof}

\section{Proof of Theorem~\ref{t2.1}}
\label{s4}

\subsection{Extension: proof of Proposition~\ref{p2.3}}
\label{s4.1}
We only need to prove that if Eq.~\eqref{2.3} is $G$-controllable, then so is~\eqref{2.2}, since the converse implication is obvious. Let $\tilde\eta,\tilde\zeta\in C^\infty(J_T,G)$ be such that the solution~$\tilde u$ of problem~\eqref{2.3}, \eqref{1.2} satisfies~\eqref{2.1} with $s=0$. In view of~\eqref{3.15}, replacing~$K_0$ by a slightly larger constant, we can assume that $\tilde\zeta(0)=\tilde\zeta(T)=0$. Let us set $u=\tilde u+\tilde\zeta$. Then~$u$ is a solution of~\eqref{2.2}, \eqref{1.2} with the control $\eta=\tilde\eta+\p_t\tilde\zeta$, which takes values in~$G$. Moreover, $u(T)=\tilde u(T)$ and, hence, $u$ satisfies~\eqref{2.1}. This completes the proof of Proposition~\ref{p2.3}, showing in addition that the constants~$K_0$ entering~\eqref{2.1} and corresponding to Eqs.~\eqref{2.2} and~\eqref{2.3} can be chosen arbitrarily close to each other. 

\subsection{Convexification: proof of Proposition~\ref{p2.5}}
\label{s4.2}
We begin with a number of simple observations. Let us set $G_1=\FF(N,G)$. By Proposition~\ref{p2.3}, if Eq.~\eqref{2.3} is $G$-controllability, then so is Eq.~\eqref{2.2}, and since $G\subset G_1$, we see that~\eqref{2.2} is $G_1$-controllable. Thus, it suffices to prove that if~\eqref{2.2} is $G_1$-controllable, then~\eqref{2.3} is $G$-controllable. To establish this property, it suffices to prove that, for any $\eta_1\in C^\infty(J_T,G_1)$ and any $\delta>0$ there are $\eta,\zeta\in L^\infty(J_T,G)$ such that the solution $u(t,x)$ of~\eqref{2.3}, \eqref{1.2} satisfies the inequality
\begin{equation} \label{4.1}
\|u(T)-u_1(T)\|_{H_{\rm ul}^1}<\delta, 
\end{equation}
where~$u_1$ stands for the solution of~\eqref{2.2}, \eqref{1.2} with $\eta=\eta_1$. Indeed, if this property is established, then we take two sequences $\{\eta^n\}, \{\zeta^n\}\subset C^\infty(J_T,G)$ such that (cf.~\eqref{3.33})
$$
\|\eta^n-\eta\|_{L^2(J_T,G)}+\|\zeta^n-\zeta\|_{L^4(J_T,G)}\to0
\quad\mbox{as $n\to\infty$}
$$
and denote by $u^n(t,x)$ the solution of~\eqref{2.3}, \eqref{1.2} with $\eta=\eta^n$ and $\zeta=\zeta^n$. It follows from~\eqref{3.33} that 
\begin{equation} \label{4.4}
\gamma_n:=\|u^n(T)-u(T)\|_{H_{\rm ul}^1}\to0\quad\mbox{as $n\to\infty$}.  
\end{equation}
Combining~\eqref{4.1} and~\eqref{4.4} and using the continuous embedding $H_{\rm ul}^1\subset L^\infty$, we derive
\begin{align*}
\|u^n(T)\|_{L^\infty}&\le \|u_1(T)\|_{L^\infty}+\|u(T)-u_1(T)\|_{L^\infty}+\|u^n(T)-u(T)\|_{L^\infty}\\
&\le K_0+C_1(\delta+\gamma_n),\\
\|u^n(T)-\hat u\|_{L^2(I_r)}
&\le \|u^n(T)-u(T)\|_{L^2(I_r)}+\|u(T)-u_1(T)\|_{L^2(I_r)}+\|u_1(T)-\hat u\|_{L^2(I_r)}\\
&\le C_2(\gamma_n+\delta)+\|u_1(T)-\hat u\|_{L^2(I_r)},
\end{align*}
where $I_r=[-r,r]$. Choosing~$\delta>0$ sufficiently small and~$n$ sufficiently large, we conclude that~$u^n$ satisfies inequalities~\eqref{2.1}, with a constant~$K_0$ arbitrarily close to that for~$u_1$. Finally, a similar approximation argument shows that, when proving~\eqref{4.1}, we can assume~$\eta_1(t)$ to be piecewise constant, with finitely many intervals of constancy. The construction of controls~$\eta, \zeta\in L^\infty(J_T,G)$ for which~\eqref{4.1} holds is carried out in several steps. 

\medskip
{\sl Step~1: An auxiliary lemma}. We shall need the following lemma, which establishes a relationship between $G$- and $\FF(N,G)$-valued controls. 

\begin{lemma} \label{l5.1}
For any $\eta_1\in\FF(N,G)$ and any $\nu>0$ there is an integer
$k\ge1$, numbers $\alpha_j>0$, and vectors $\eta,\zeta^j\in G$,
$j=1,\dots,k$, such that
\begin{align}
\sum_{j=1}^k\alpha_j&=1,\label{4.2}\\
\Bigl\|\eta_1-\BB(u)-\Bigl(\eta-\sum_{j=1}^k\alpha_j\bigl(\BB(u+\zeta^j)-\mu\p_x^2\zeta^j\bigr)\Bigr)\Bigr\|_{H_{\rm ul}^1}
&\le\nu\quad\mbox{for any $u\in H_{\rm ul}^1$}.\label{4.3}
\end{align}
\end{lemma}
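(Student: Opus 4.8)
The plan is to expand the left-hand side of~\eqref{4.3} and read off what the vectors $\eta,\zeta^j$ and the weights $\alpha_j$ must achieve. Writing $\bar\zeta=\sum_{j=1}^k\alpha_j\zeta^j$ and using the identity $\BB(u+\zeta^j)=\BB(u)+u\p_x\zeta^j+\zeta^j\p_xu+\BB(\zeta^j)$ together with~\eqref{4.2}, one finds
$$
\sum_{j=1}^k\alpha_j\bigl(\BB(u+\zeta^j)-\mu\p_x^2\zeta^j\bigr)
=\BB(u)+u\p_x\bar\zeta+\bar\zeta\p_xu+\sum_{j=1}^k\alpha_j\BB(\zeta^j)-\mu\p_x^2\bar\zeta,
$$
so that the vector under the norm in~\eqref{4.3} equals $\eta_1-\eta+\sum_j\alpha_j\BB(\zeta^j)+\bigl(u\p_x\bar\zeta+\bar\zeta\p_xu-\mu\p_x^2\bar\zeta\bigr)$. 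Since the estimate must hold for \emph{every} $u\in H_{\rm ul}^1$ while $\bar\zeta$ is fixed, the $u$-dependent contribution $u\p_x\bar\zeta+\bar\zeta\p_xu$ forces me to impose the exact mean-zero condition $\bar\zeta=0$, which simultaneously annihilates the diffusion term $\mu\p_x^2\bar\zeta$. Thus the lemma reduces to finding $\eta,\zeta^j\in G$ and $\alpha_j>0$ with $\sum_j\alpha_j=1$, $\sum_j\alpha_j\zeta^j=0$, and $\|\eta_1-\eta+\sum_j\alpha_j\BB(\zeta^j)\|_{H_{\rm ul}^1}\le\nu$. By the definition of $\FF(N,G)$ and linearity, it suffices to treat a single generator $\eta_1=\eta^*+\xi\p_x\tilde\xi+\tilde\xi\p_x\xi$ with $\eta^*,\xi\in G$ and $\tilde\xi\in N$.

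For such a generator I would use an explicit two-point family depending on a large parameter $s>0$:
$$
\zeta^1=\xi-s\tilde\xi,\qquad \zeta^2=-\tfrac1s\,\xi+\tilde\xi,\qquad
\alpha_1=\tfrac1{s+1},\qquad \alpha_2=\tfrac{s}{s+1}.
$$
Both vectors lie in $G$ because $\xi\in G$ and $\tilde\xi\in N\subset G$, and the weights are positive with $\alpha_1+\alpha_2=1$. Since $\zeta^2=-\frac1s\zeta^1$, the mean vanishes, $\alpha_1\zeta^1+\alpha_2\zeta^2=0$, and the quadratic average collapses to one term: using $\BB(c\,w)=c^2\BB(w)$ one gets $\sum_j\alpha_j\BB(\zeta^j)=\frac1s\BB(\zeta^1)$. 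Expanding $\BB(\xi-s\tilde\xi)=\BB(\xi)-s(\xi\p_x\tilde\xi+\tilde\xi\p_x\xi)+s^2\BB(\tilde\xi)$ then yields
$$
\sum_{j=1}^2\alpha_j\BB(\zeta^j)=-\bigl(\xi\p_x\tilde\xi+\tilde\xi\p_x\xi\bigr)+s\,\BB(\tilde\xi)+\tfrac1s\,\BB(\xi).
$$

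It remains to choose $\eta$. Because $N$ is invariant under scalar multiplication and $\BB(N)\subset G$, the self-term $s\,\BB(\tilde\xi)$ lies in $G$; setting $\eta=\eta^*+s\,\BB(\tilde\xi)\in G$ cancels it together with $\eta^*$ and the target cross term, leaving exactly $\eta_1-\eta+\sum_j\alpha_j\BB(\zeta^j)=\frac1s\BB(\xi)$. Hence the left-hand side of~\eqref{4.3} has $H_{\rm ul}^1$-norm equal to $\frac1s\|\BB(\xi)\|_{H_{\rm ul}^1}$, which is $\le\nu$ once $s$ is large. A general $\eta_1$, being a finite sum of $m$ such generators, is handled by superposing the corresponding two-point families with all weights divided by $m$ and each $\xi_i$ replaced by $m\xi_i$ to compensate; this keeps the weights positive and summing to one, keeps the mean zero, reproduces each cross term at order one, and absorbs the enlarged self-terms into $G$.

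The main obstacle is precisely the self-interaction term $\BB(\xi)$ with $\xi\in G$. Since any admissible average has the form $\sum_j\alpha_j\BB(\zeta^j)=\frac12\p_x\sum_j\alpha_j(\zeta^j)^2$ with \emph{positive} weights, the contribution of $\BB(\xi)$ can never be cancelled against anything, and $\BB(\xi)\notin G$ in general, so the pure cross term cannot be produced at finite amplitude. The device that saves the argument is the asymmetric scaling above, which makes the unwanted $\BB(\xi)$ appear with the vanishing coefficient $1/s$ while the needed cross term survives at order one and the large self-term $s\,\BB(\tilde\xi)$ is absorbed into $G$ through the hypothesis $\BB(N)\subset G$.
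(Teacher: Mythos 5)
Your proof is correct and follows essentially the same route as the paper's: the zero-mean convex combination of shifts kills the $u$-dependent and diffusion terms, and your asymmetric pair $\zeta^1=\xi-s\tilde\xi$, $\zeta^2=-\tfrac1s\zeta^1$ realizes exactly the paper's convexification identity $\xi\p_x\tilde\xi+\tilde\xi\p_x\xi=\BB(\e\xi+\e^{-1}\tilde\xi)-\e^2\BB(\xi)-\e^{-2}\BB(\tilde\xi)$ (with $\e=s^{-1/2}$), the large self-term $s\,\BB(\tilde\xi)$ being absorbed into $\eta$ through $\BB(N)\subset G$ and the unwanted $\BB(\xi)$ vanishing at rate $1/s$. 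The only cosmetic difference is that the paper uses symmetric pairs $\pm\sqrt{m}\,\tilde\zeta^j$ with equal weights $1/(2m)$ and a single small parameter for all generators at once, whereas you use an asymmetric two-point family per generator with weights $\tfrac1{s+1},\tfrac{s}{s+1}$ and then superpose.
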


\begin{proof}
It suffices to find functions $\eta,\tilde\zeta^j\in G$, $j=1,\dots,m$, such that
\begin{equation} \label{4.5}
\Bigl\|\eta_1-\eta+\sum_{j=1}^k\BB(\tilde\zeta^j)\Bigr\|_{H_{\rm ul}^1}\le\nu.
\end{equation}
Indeed, if such vectors are constructed, then we can set $k=2m$,
$$
\alpha_j=\alpha_{j+m}=\frac{1}{2m},\quad 
\zeta^j=-\zeta^{j+m}=\sqrt{m}\,\tilde\zeta^j\quad\mbox{for $j=1,\dots,m$},
$$
and relations~\eqref{4.2} and~\eqref{4.3} are easily checked. 

To construct $\eta,\tilde\zeta^j\in G$ satisfying~\eqref{4.5}, note that if $\eta_1\in\FF(N,G)$, then there are functions 
$\tilde\eta_j,\xi_j\in G$ and $\tilde\xi_j\in N$ such that
\begin{equation} \label{72}
\eta_1=\sum_{j=1}^k\bigl(\tilde\eta_j-\xi_j\p_x\tilde\xi_j-\tilde\xi_j\p_x\xi_j\bigr).
\end{equation}
Now note that, for any $\e>0$, 
$$
\xi_j\p_x\tilde\xi_j+\tilde\xi_j\p_x\xi_j=\BB(\e\xi_j+\e^{-1}\tilde\xi_j)-\e^2\BB(\xi_j)-\e^{-2}\BB(\tilde\xi_j).
$$
Combining this with~\eqref{72}, we obtain
$$
\eta_1-\sum_{j=1}^k\bigl(\tilde\eta_j+\e^{-2}\BB(\tilde\xi_j)\bigr)
+\sum_{j=1}^k\BB(\e\xi_j+\e^{-1}\tilde\xi_j)=\e^2\sum_{j=1}^k\BB(\xi_j).
$$
Choosing $\e>0$ sufficiently small and setting\footnote{Recall that $\BB(N)\subset G$, so that the vector~$\eta$ defined in~\eqref{100} belongs to~$G$.}
\begin{equation} \label{100}
\eta=\sum_{j=1}^k\bigl(\tilde\eta_j+\e^{-2}\BB(\tilde\xi_j)\bigr), \quad
\tilde\zeta^j=\e\xi_j+\e^{-1}\tilde\xi_j, 
\end{equation}
we arrive at the required inequality~\eqref{4.5}. 
\end{proof}

{\sl Step~2: Comparison with an auxiliary equation}.
Let $\eta_1\in L^\infty(J_T,G_1)$ be a piecewise constant function and let~$u_1$ be the solution of problem~\eqref{2.2}, \eqref{1.2} with $\eta=\eta_1$. To simplify notation, we assume that there are only two intervals of constancy for~$\eta_1(t)$ and  write
$$
\eta_1(t,x)=I_{J_1}(t)\eta_1^1(x)+I_{J_2}(t)\eta_1^2(x), 
$$
where $\eta_1^1,\eta_1^2\in G_1$ are some vectors and $J_1=[0,a]$ and $J_2=[a,T]$ with $a\in(0,T)$. We fix a small $\nu>0$ and, for $i=1,2$, choose numbers~$\alpha_j^i>0$, $j=1,\dots,k_i$, and vectors $\eta^i,\zeta^{ji}\in G$ such that~\eqref{4.2}, \eqref{4.3} hold. Let us
consider the following equation on~$J_T$:
\begin{equation} \label{4.7}
\p_t u-\mu\p_x^2u+\sum_{j=1}^{k_i}\alpha_j^i\bigl(\BB(u+\zeta^{ji}(x))-\mu\p_x^2\zeta^{ji}(x)\bigr)=h(t,x)+\eta^i(x), \quad t\in J_i.
\end{equation}
This is a Burgers-type equation, and using the same arguments as in the proof of Theorem~\ref{t3.2}, it can be proved that
problem~\eqref{4.7}, \eqref{1.2} has a unique solution $\tilde
u(t,x)$ satisfying~\eqref{3.9}. Moreover, in view of the regularity of the data and an analogue of Proposition~\ref{p3.3} for Eq.~\eqref{4.7}, we have 
\begin{equation} \label{4.50}
\tilde u\in C(J_T,H_{\rm ul}^k)\quad \mbox{for any $k\ge0$}. 
\end{equation}
On the other hand, we can rewrite~\eqref{4.7} in the form
\begin{equation} \label{4.8}
\p_t u-\mu \p_x^2u+u\p_xu=h(t,x)+\eta_1^i(x)-c_\nu^i(t,x),\quad t\in J_i, 
\end{equation}
where $c_\nu^i(t,x)$ is defined for $t\in J_i$ by the function under sign of norm on the left-hand side of~\eqref{4.3} in which $\eta_1=\eta_1^i$, $\eta=\eta^i$, $\alpha_j=\alpha_j^i$, $\zeta^j=\zeta^{ji}$, and $u=\tilde u(t,x)$.
Since the resolving operator for~\eqref{4.8} is Lipschitz continuous on bounded subsets, there is a constant $C>0$ depending only on the $L^\infty$~norms of~$\eta_1^i$ such that (see Remark~\ref{r3.1})
\begin{equation} \label{4.9}
\|u_1(T)-\tilde u(T)\|_{H_{\rm ul}^1}
\le C\bigl(\|c_\nu^1\|_{L^2(J_1,L^\infty)}+\|c_\delta^2\|_{L^2(J_2,L^\infty)}\bigr)
\le C\sqrt{2T}\,\nu.
\end{equation}
On the other hand, let us define $\eta\in L^\infty(J_T,G)$ by $\eta(t)=\eta^i$ for $t\in J_i$.  We shall show in the next steps that  there is a sequence $\{\zeta_m\}\subset L^\infty(J_T,G)$ such that
\begin{equation} \label{4.10}
\|u^m(T)-\tilde u(T)\|_{H_{\rm ul}^1}\to0\quad\mbox{as $m\to\infty$},
\end{equation}
where $u^m(t,x)$ denotes the solution of problem~\eqref{2.3}, \eqref{1.2} in which $\zeta=\zeta_m$. Combining inequalities~\eqref{4.9} and~\eqref{4.10} with $\nu\ll1$ and~$m\gg1$, we obtain the required estimate~\eqref{4.1} for $u=u^m$.

\medskip
{\sl Step~3: Fast oscillating controls}.
Following a classical idea in the control theory, we define functions
$\zeta_m\in L^\infty(J_T,G)$ by the relation
$$
\zeta_m(t)=\left\{
\begin{array}{cl}
\zeta^{(1)}(mt/a)&\quad\mbox{for $t\in J_1$},\\[4pt]
\zeta^{(2)}(m(t-a)/(T-a))&\quad\mbox{for $t\in J_2$},
\end{array}
\right.
$$
where $\zeta^{(i)}(t)$ is a $1$-periodic $G$-valued function such that
$$
\zeta^{(i)}(t)=\zeta^{ji}\quad\mbox{for $0\le t-(\alpha_1^i+\cdots+\alpha_{j-1}^i)< \alpha_j^i$, $j=1,\dots,k_i$}. 
$$
Let us rewrite~\eqref{4.7} in the form
$$
\p_t u-\mu \p_x^2(u+\zeta_m(t,x))+\BB(u+\zeta_m(t,x))=h(t,x)+\eta(t,x)+f_m(t,x),
$$
where we set $f_m=f_{m1}+f_{m2}$,
\begin{align}
f_{m1}(t)&=-\mu \p_x^2\zeta_m+\mu\sum_{j=1}^{k_i}\alpha_j^i\p_x^2\zeta^{ji},\label{4.11}\\
f_{m2}(t)&=\BB(\tilde u+\zeta_m)-\sum_{j=1}^{k_i}\alpha_j^i \BB(\tilde u+\zeta^{ji})\label{4.12}
\end{align}
for $t\in J_i$. We now define an operator $\KK:L^2(J_T,L^\infty)\to L^\infty(J_T\times\R)\cap C_*(J_T,L_{\rm ul}^2)$ by the relation  
$$
(\KK f)(t,x)=\int_0^tK_{t-s}*f(s)\,ds,
$$
where the kernel~$K_t$ was introduced in~\eqref{3.00}. Setting $v_m=\tilde u-\KK f_m$, we see that the function~$v_m(t,x)$ satisfies the equation
\begin{equation} 
\p_t v-\mu \p_x^2(v+\zeta_m)+\BB(v+\zeta_m+\KK f_m)=h+\eta.
\end{equation}
Suppose we have shown that
\begin{equation} \label{52}
\|\KK f_m(T)\|_{H_{\rm ul}^1}+\|\KK f_m\|_{L^4(J_T,W^{2,\infty})}\to0
\quad\mbox{as $m\to\infty$}.
\end{equation}
Then, by~\eqref{3.33}, we have
$$
\|u^m(T)-\tilde u(T)\|_{H_{\rm ul}^1}
\le \|u^m(T)-v_m(T)\|_{H_{\rm ul}^1}+\|\KK f_m(T)\|_{H_{\rm ul}^1}\to0
\quad\mbox{as $m\to\infty$}.
$$
Thus, it remains to prove~\eqref{52}.

\medskip
{\sl Step~4: Proof of~\eqref{52}}. We first note that $\{f_m\}$ is a bounded sequence in $L^\infty(J_T,H_{\rm ul}^k)$ for any $k\ge0$. Integrating by parts, it follows that 
\begin{equation} \label{4.13}
\KK f_m=F_m+\mu\,\KK (\p_x^2F_m), 
\end{equation}
where we set 
$$
F_m(t)=\int_0^tf_m(s)\,ds. 
$$
In view of Proposition~\ref{p3.1}, the operator~$\KK$ is continuous from~$L^1(J_T,H_{\rm ul}^k)$ to $C(J_T,H_{\rm ul}^k)$ for any integer $k\ge0$. Therefore~\eqref{52} will follow if we show that
$$
\|F_m\|_{C(J_T,H_{\rm ul}^k)}\to0\quad\mbox{as $m\to\infty$}.
$$
This convergence is a straightforward consequence of relations~\eqref{4.11} and~\eqref{4.12}; e.g., see~\cite[Section~3.3]{shirikyan-cmp2006}. 
The proof of Proposition~\ref{p2.5} is complete.

\subsection{Saturation}
\label{s4.3}
We wish to prove~\eqref{2.7}. To this end, we shall need the following lemma describing explicitly some subspaces that are certainly included in~$E_k$.  Without loss of generality, we assume that $\lambda_1>\lambda_2$. 

\begin{lemma} \label{l2.7}
Let us set~$\Lambda_k=\{n_1\lambda_1+n_2\lambda_2\ge0: n_1,n_2\in\Z, |n_1|+|n_2|\le k\}$. Then $E_{\Lambda_k}\subset E_k$ for any integer $k\ge1$.
\end{lemma}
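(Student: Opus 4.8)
The plan is to prove Lemma~\ref{l2.7} by induction on $k$, using the recursive definition $E_k=\FF(N,E_{k-1})$ together with the structure of the generating operation~\eqref{2.6}. Recall that $N$ consists of (scalar multiples of) $\cos(\lambda_i x)$ and $\sin(\lambda_i x)$ for $i=1,2$, and that $\FF(N,G)$ is the span of functions $\eta+\xi\p_x\tilde\xi+\tilde\xi\p_x\xi$ with $\eta,\xi\in G$ and $\tilde\xi\in N$. The key point is that the bilinear term $\xi\p_x\tilde\xi+\tilde\xi\p_x\xi=\p_x(\xi\tilde\xi)$ lets us multiply a function already present in $E_{k-1}$ by one of the four basic exponentials in $N$, and products of trigonometric functions expand into sums of trigonometric functions whose frequencies are the sum and difference of the original frequencies. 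This is exactly what generates the lattice $\Lambda_k=\{n_1\lambda_1+n_2\lambda_2\ge 0:|n_1|+|n_2|\le k\}$.

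For the base case $k=1$, I would check directly that $\Lambda_1=\{0,\lambda_1,\lambda_2\}$ (taking nonnegative values with $|n_1|+|n_2|\le1$), so $E_{\Lambda_1}\subset E_0=E_\Lambda$ since $\Lambda\supset\{0,\lambda_1,\lambda_2\}$; and as $E_0\subset\FF(N,E_0)=E_1$, the inclusion $E_{\Lambda_1}\subset E_1$ holds. For the inductive step, assume $E_{\Lambda_{k-1}}\subset E_{k-1}$. Take any frequency $\lambda=n_1\lambda_1+n_2\lambda_2\ge 0$ with $|n_1|+|n_2|\le k$; I want to produce $\cos(\lambda x)$ and $\sin(\lambda x)$ inside $E_k=\FF(N,E_{k-1})$. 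Since $|n_1|+|n_2|\le k$, I can write $\lambda=\lambda'\pm\lambda_i$ where $\lambda'$ corresponds to a lattice point in $\Lambda_{k-1}$ and $\lambda_i\in\{\lambda_1,\lambda_2\}$ (peel off one unit of frequency). By the induction hypothesis $\cos(\lambda' x),\sin(\lambda' x)\in E_{k-1}$, and $\cos(\lambda_i x),\sin(\lambda_i x)$ are the generators in $N$. Forming $\p_x\bigl(\cos(\lambda' x)\cos(\lambda_i x)\bigr)$ and the analogous products, and using the product-to-sum identities, yields linear combinations of $\sin((\lambda'+\lambda_i)x)$ and $\sin((\lambda'-\lambda_i)x)$ (times frequency factors); together with the three other sign combinations of $\xi\in\{\cos,\sin\}(\lambda' x)$ and $\tilde\xi\in\{\cos,\sin\}(\lambda_i x)$, one solves a small linear system to isolate $\cos(\lambda x)$ and $\sin(\lambda x)$ individually. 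The additive term $\eta\in E_{k-1}$ in~\eqref{2.6} lets me subtract off any lower-frequency contributions that are already in $E_{k-1}\supset E_{\Lambda_{k-1}}$.

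The main obstacle I anticipate is bookkeeping rather than conceptual: verifying that for every lattice point of $\Lambda_k$ one can indeed peel off a single generator frequency and land in $\Lambda_{k-1}$ (the nonnegativity constraint $\lambda\ge 0$ and the possibility that $\lambda'=\lambda-\lambda_i$ becomes negative require choosing the sign of the peeled generator carefully, or using that both $\cos$ and $\sin$ at frequency $|\lambda'|$ are available so the sign of $\lambda'$ is immaterial). I would also need to confirm that the resulting products stay within the required finite-dimensional span and that the linear system separating $\cos(\lambda x)$ from $\sin(\lambda x)$ is nondegenerate, which follows because $\sin$ and $\cos$ at distinct frequencies are linearly independent and the frequencies $\lambda'+\lambda_i$, $|\lambda'-\lambda_i|$ are distinct whenever $\lambda'\neq 0$ (the degenerate cases $\lambda'=0$ or $\lambda_i$ coinciding are handled separately and are easier). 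Once the separation is justified, collecting the pieces gives $E_{\Lambda_k}\subset E_k$ and closes the induction.
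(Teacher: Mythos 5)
Your argument is correct and shares the paper's overall strategy: induction on $k$, with new frequencies produced by differentiating the product of an already-available trigonometric function with a generator from $N$, via $\xi\p_x\tilde\xi+\tilde\xi\p_x\xi=\p_x(\xi\tilde\xi)$ and the product-to-sum identities. The genuine difference lies in how the parasitic difference frequency is eliminated. The paper peels off \emph{two} units of the same generator, setting $\lambda'=\lambda-\lambda_1$ and $\lambda''=\lambda-2\lambda_1$, and removes the unwanted term at frequency $\lambda''$ through the additive part $\eta$ of~\eqref{2.6}; this forces $\lambda''\in\Lambda_k$ and is the reason for its case split ($n_1\ge2$ versus $\lambda=\lambda_1+k\lambda_2$ versus $\lambda=(k+1)\lambda_2$). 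You peel off a single unit and cancel the difference frequency by combining the four parity products: with $s=\lambda'+\lambda_i$ and $d=\lambda'-\lambda_i$ one has
\begin{equation*}
\p_x\bigl(\sin(\lambda'x)\sin(\lambda_ix)\bigr)-\p_x\bigl(\cos(\lambda'x)\cos(\lambda_ix)\bigr)=s\sin(sx),\qquad
\p_x\bigl(\sin(\lambda'x)\cos(\lambda_ix)\bigr)+\p_x\bigl(\cos(\lambda'x)\sin(\lambda_ix)\bigr)=s\cos(sx),
\end{equation*}
and the complementary combinations produce the frequency $|d|$; since $\FF(N,E_{k-1})$ is a linear span, the target functions are obtained directly, with no subtraction needed. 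This removes the paper's case analysis and is, if anything, cleaner. One caution: your closing remark that the additive term $\eta$ ``subtracts off lower-frequency contributions already in $E_{k-1}$'' should not be leaned on, because the parasitic frequency $|\lambda'-\lambda_i|$ can have coefficient sum exactly $k$ (for instance $\lambda=\lambda_1+(k-1)\lambda_2$ peeled by $\lambda_1$ gives the coefficients $(\mp1,\pm(k-1))$) and so need not lie in $E_{\Lambda_{k-1}}$; this is precisely the obstruction the paper circumvents by going down two steps, and your four-product cancellation circumvents it differently, so the remark is redundant rather than damaging. The remaining bookkeeping you flag --- choosing the peeled generator so that $|\lambda'|\in\Lambda_{k-1}$, using that $\cos$ is even and $\sin$ is odd to disregard the sign of $\lambda'$, and the degenerate cases $\lambda=0$ or $\lambda'=0$ --- resolves exactly as you indicate.
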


\begin{proof}
The proof is by induction on~$k$. We confine ourselves to carrying out the induction step, since the base of induction can be checked by a similar argument. 

Let us fix any integer $k\ge2$ and assume that $E_{\Lambda_k}\subset E_k$. We need to show that that the functions $\sin(\lambda x)$ and $\cos(\lambda x)$ belong to~$E_{k+1}$ for $\lambda=n_1\lambda_1+n_2\lambda_2\in\Lambda_{k+1}$. We shall only consider the case when the coefficients~$n_1$ and~$n_2$ are non-negative, since the other situations can be treated by similar arguments. Assume first   $n_1\ge2$ and $n_1+n_2\le k+1$. Then $\lambda'=\lambda-\lambda_1$ and $\lambda''=\lambda-2\lambda_1$ belong to~$\Lambda_k$, and we have
\begin{align}
\sin(\lambda x)&=\tfrac{\lambda''}{\lambda}\sin(\lambda''x)
+\tfrac{2}{\lambda}\bigl(\sin(\lambda_1x)\,\p_x\sin(\lambda'x)
+\sin(\lambda'x)\,\p_x\sin(\lambda_1 x)\bigr),\label{4.31}
\\
\cos(\lambda x)&=-\tfrac{\lambda}{\lambda''}\cos(\lambda''x)
+\tfrac{2}{\lambda''}\bigl(\cos(\lambda_1x)\,\p_x\sin(\lambda'x)
+\sin(\lambda'x)\,\p_x\cos(\lambda_1x)\bigr),\label{4.32}
\end{align}
whence we conclude that the functions on the left-hand side of these relations belong to~$E_{k+1}$. If $\lambda=\lambda_1+k\lambda_2\in\Lambda_{k+1}$, then setting $\lambda'=\lambda-\lambda_2$ and $\lambda''=\lambda-2\lambda_2$, we see that relations~\eqref{4.31} and~\eqref{4.32} with~$\lambda_1$ replaced by~$\lambda_2$ remain valid, and we can conclude again that $\sin(\lambda x),\cos(\lambda x)\in E_{k+1}$. Finally, the same proof applies also in the case $\lambda=(k+1)\lambda_2\in \Lambda_{k+1}$.
\end{proof}

Lemma~\ref{l2.7} shows that the union of~$E_k$ (which is a vector space) contains the trigonometric functions whose frequencies belong to the set $\Lambda_\infty:=\cup_k\Lambda_k$. It is straightforward to check that~$\Lambda_\infty$ is dense in~$\R_+$. 

\subsection{Large control space}
\label{s4.4}
Let us prove that~\eqref{2.2} is $E_{\Lambda_k}$-controllable (and, hence, $E_k$-controllable) for a sufficiently large~$k$. Indeed, let us set
\begin{equation} \label{4.33}
u(t,x)=T^{-1}\bigl(t\hat u(x)+(T-t)u_0(x)\bigr), \quad (t,x)\in J_T\times\R. 
\end{equation}
This is an infinity smooth function in~$(t,x)$ all of whose derivatives are bounded. We now define
$$
\eta(t,x)=\p_tu-\mu\p_x^2u+u\p_xu-h
$$
and note that $\eta\in L^2(J_T,H_{\rm ul}^s)$ for any $s\ge0$ and that the solution of problem~\eqref{2.2}, \eqref{1.2} is given by~\eqref{4.33} and coincides with~$\hat u$ for $t=T$. We have thus a control that steers a solution starting from~$u_0$ to~$\hat u$. To prove the required property, we approximate~$\eta$, in local topologies, by an $E_{\Lambda_k}$-valued function and use the continuity of the resolving operator to show that the corresponding solutions are close. 

More precisely, let $\chi\in C^\infty(\R)$ be such that $0\le\chi\le1$, $\sup_\R|\chi'|\le 2$, $\chi(x)=0$ for $|x|\ge2$, and $\chi(x)=1$ for $|x|\le1$. Then the sequence $\eta_n(t,x)=\chi(x/n)\eta(t,x)$ possesses the following properties:
\begin{gather}
\eta_n(t,x)=0\quad\mbox{for $|x|\ge2n$ and any $n\ge1$},\label{4.34}\\
\|\eta_n\|_{L^2(J_T,H_{\rm ul}^1)}\le 3\|\eta\|_{L^2(J_T,H_{\rm ul}^1)}
\quad\mbox{for all $n\ge1$},\label{4.35}\\
\|\eta_n-\eta\|_{L^2(J_T\times I_\rho)}\to0\quad\mbox{as $n\to\infty$ for any $\rho>0$}, \label{4.36}
\end{gather}
where $I_\rho=[-\rho,\rho]$. Given a frequency $\omega>0$ and an integer $N\ge1$, we denote by ${\mathsf P}_{\omega,N}:L^2(I_{\pi/\omega})\to L^\infty(\R)$ a linear projection that takes a function~$g$ to its truncated Fourier series
$$
({\mathsf P}_{\omega,N}g)(x)=\sum_{|j|\le N}g_je^{\omega i j x}, \quad
g_j=\frac{\omega}{2\pi}\int_{I_{\pi/\omega}}g(y)e^{-\omega i j y}dy. 
$$
The function ${\mathsf P}_{\omega,N}g$ is $2\pi/\omega$-periodic, and it follows from~\eqref{4.34} and~\eqref{4.35} that
\begin{gather}
\|{\mathsf P}_{\omega,N}\eta_n\|_{L^1(J_T,L^\infty)}
\le C_1\|{\mathsf P}_{\omega,N}\eta_n\|_{L^2(J_T,H_{\rm ul}^1)}
\le C_2\quad\mbox{for all $N,n\ge1$},\label{4.37}\\
\|{\mathsf P}_{\omega,N}\eta_n-\eta_n\|_{L^2(J_T\times I_\rho)}\to0
\quad\mbox{as $N\to\infty$ for any $n\ge1$}. 
\label{4.38}
\end{gather}
Note that if $\omega\in\Lambda_\infty$, then for any $N\ge1$ there is $k\ge1$ such that the image of~${\mathsf P}_{\omega,N}$ is contained in~$E_{\Lambda_k}$. 

Let us denote by~$u_{n,N}(t,x)$ the solution of problem~\eqref{2.2}, \eqref{1.2} with $\eta={\mathsf P}_{\omega,N}\eta_n$. In view of inequality~\eqref{3.18} with $\delta=\e/2$ and $R=\max\{\|u_0\|_{L^\infty},\|\eta\|_{L^1(J_T,L^\infty)},C_2\}$, we have 
\begin{multline} \label{4.39}
\|u_{n,N}(T)-\hat u\|_{L^2(I_r)}=\|u_{n,N}(T)-u(T)\|_{L^2(I_r)}
\le\tfrac\e2+C\,\|{\mathsf P}_{\omega,N}\eta_n-\eta\|_{L^1(J_T,L^2(I_\rho))}
\\
\le \tfrac\e2+C\sqrt{T}\,\Bigl(\|{\mathsf P}_{\omega,N}\eta_n-\eta_n\|_{L^1(J_T,L^2(I_\rho))}+\|\eta_n-\eta\|_{L^1(J_T,L^2(I_\rho))}\Bigr).
\end{multline}
We now choose $n\ge1$ such that $C\sqrt{T}\,\|\eta_n-\eta\|_{L^1(J_T,L^2(I_\rho))}<\frac\e4$; see~\eqref{4.36}. We next find $\omega\in\Lambda_\infty$ so that $\frac\pi\omega>\max(2n,\rho)$ (this is possible since~$\Lambda_\infty$ is dense in~$\R_+$) 
and choose~$N\ge1$ such that $C\sqrt{T}\|{\mathsf P}_{\omega,N}\eta_n-\eta_n\|_{L^1(J_T,L^2(I_\rho))}<\frac\e4$. Substituting these estimates into~\eqref{4.39}, we obtain 
$$
\|u_{n,N}(T)-\hat u\|_{L^2(I_r)}<\e,
$$
which is the second inequality in~\eqref{2.1} with $s=0$. It remains to note that, in view of~\eqref{3.10}, \eqref{4.35}, and~\eqref{4.37}, the first inequality in~\eqref{2.1} is also satisfied.

\subsection{Reduction to the case $s=0$}
\label{s4.5}
We now prove that if inequalities~\eqref{2.1} hold for $s=0$ and arbitrary~$T$, $r$,  and~$\e$, then they remain valid for any $s\ge1$. Indeed, we fix an integer $s\ge1$, positive numbers~$r$ and~$\e$, and functions~$u_0,\hat u\in C_b^\infty$. Let us define~$\eta$ by zero on the half-line $[T,+\infty)$ and denote by~$\hat u(t)$ the solution of~\eqref{1.1}, \eqref{1.3} issued from~$\hat u$ at $t=T$. Using interpolation, regularity of solutions (Proposition~\ref{p3.3}), and continuity of the resolving operator in local norms (Proposition~\ref{p3.4}), we can write
\begin{multline}
\|u(T+\tau)-\hat u(\tau)\|_{H^s(I_r)}^2
\le C_1\|u(T+\tau)-\hat u(\tau)\|_{L^2(I_r)}\|u(T+\tau)-\hat u(\tau)\|_{H^{2s}(I_r)}
\\
\le C_2\tau^{-2s}\bigl(\delta+C\,\|u(T)-\hat u\|_{L^2(I_\rho)}\bigr)\,
Q_{2s}\bigl(\|u(T)\|_{L^\infty}+K\bigr),
\label{4.51}
\end{multline}
where $C_i$ are some constants depending on~$R$ and~$s$,  the quantities~$C$ and~$Q_{2s}$ are those entering~\eqref{3.18} and~\eqref{3.16}, respectively, and $K=\|\hat u\|_{L^\infty}+\|h\|_{L^1(J_T,H_{\rm ul}^{2s})}$. Furthermore, in view of Proposition~\ref{p3.3}, we have
\begin{equation*}
\|\hat u(\tau)-\hat u\|_{H_{\rm ul}^s}\to0\quad\mbox{as $\tau\to0^+$}. 
\end{equation*}
Let~$\tau>0$ be so small that the left-hand side of this relation is smaller than~$\e^2/6$. We next choose~$\delta>0$ such that 
$$
C_2\tau^{-2s} Q_{2s}(K_0+K)\delta <\e^2/6,
$$
where $K_0$ is defined in~\eqref{2.1} (and is independent of~$r$ and~$\e$). Finally, we construct $\eta\in C^\infty(J_T,E_\Lambda)$ for which inequalities~\eqref{2.1} hold with $r=\rho$ and $\e=\delta/C$. Comparing the above estimates with~\eqref{4.51}, we obtain
$$
\|u(T+\tau)-\hat u\|_{H_{\rm ul}^s(I_r)}
:=\sup_{I\subset I_r}\|u(T+\tau)-\hat u\|_{H^s(I)}<\e,
$$
where the supremum is taken oven all intervals $I\subset I_r$ of length~$\le1$. Furthermore, in view of~\eqref{3.16}, we have
$$
\|u(T+\tau)\|_{H_{\rm ul}^s}\le \tau^{-s}Q_{s}\bigl(K_0+\|h\|_{L^1(J_T,H_{\rm ul}^s)}\bigl)=:K_s. 
$$
We have thus established inequalities~\eqref{2.1} with~$T$ and $\|\cdot\|_{H^s(I_r)}$ replaced by~$T+\tau$ and~$\|\cdot\|_{H_{\rm ul}^s(I_r)}$, respectively. Since~$T$ is arbitrary and the positive numbers~$\tau$ and~$\e$ can be chosen arbitrarily small, we conclude that inequalities~\eqref{2.1} are true for any integer~$s\ge0$ and any numbers $T,r,\e>0$. This completes the proof of Theorem~\ref{t2.1}.

\def\cprime{$'$} \def\cprime{$'$}
  \def\polhk#1{\setbox0=\hbox{#1}{\ooalign{\hidewidth
  \lower1.5ex\hbox{`}\hidewidth\crcr\unhbox0}}}
  \def\polhk#1{\setbox0=\hbox{#1}{\ooalign{\hidewidth
  \lower1.5ex\hbox{`}\hidewidth\crcr\unhbox0}}}
  \def\polhk#1{\setbox0=\hbox{#1}{\ooalign{\hidewidth
  \lower1.5ex\hbox{`}\hidewidth\crcr\unhbox0}}} \def\cprime{$'$}
  \def\polhk#1{\setbox0=\hbox{#1}{\ooalign{\hidewidth
  \lower1.5ex\hbox{`}\hidewidth\crcr\unhbox0}}} \def\cprime{$'$}
  \def\cprime{$'$} \def\cprime{$'$} \def\cprime{$'$}
\providecommand{\bysame}{\leavevmode\hbox to3em{\hrulefill}\thinspace}
\providecommand{\MR}{\relax\ifhmode\unskip\space\fi MR }
\providecommand{\MRhref}[2]{%
  \href{http://www.ams.org/mathscinet-getitem?mr=#1}{#2}
}
\providecommand{\href}[2]{#2}


\begin{thebibliography}{10}

\bibitem{AS2004}
A.~A. Agrachev and Yu.~L. Sachkov, \emph{Control {T}heory from {G}eometric
  {V}iewpoint}, Springer-Verlag, Berlin, 2004.

\bibitem{AS-2005}
A.~A. Agrachev and A.~V. Sarychev, \emph{Navier--{S}tokes equations:
  controllability by means of low modes forcing}, J. Math. Fluid Mech.
  \textbf{7} (2005), no.~1, 108--152.

\bibitem{AS-2008}
\bysame, \emph{Solid controllability in fluid dynamics}, Instability in
  {M}odels {C}onnected with {F}luid {F}lows. {I}, Int. Math. Ser. (N. Y.),
  vol.~6, Springer, New York, 2008, pp.~1--35.

\bibitem{chapouly-2009}
M.~Chapouly, \emph{Global controllability of nonviscous and viscous
  {B}urgers-type equations}, SIAM J. Control Optim. \textbf{48} (2009), no.~3,
  1567--1599.

\bibitem{coron2007}
J.-M. Coron, \emph{Control and {N}onlinearity}, American Mathematical Society,
  Providence, RI, 2007.

\bibitem{coron-2007}
J.-M. Coron, \emph{Some open problems on the control of nonlinear partial
  differential equations}, Perspectives in nonlinear partial differential
  equations, Contemp. Math., vol. 446, Amer. Math. Soc., Providence, RI, 2007,
  pp.~215--243.

\bibitem{diaz-1996}
J.~I. Diaz, \emph{Obstruction and some approximate controllability results for
  the {B}urgers equation and related problems}, Control of {P}artial
  {D}ifferential {E}quations and {A}pplications ({L}aredo, 1994), Lecture Notes
  in Pure and Appl. Math., vol. 174, Dekker, New York, 1996, pp.~63--76.

\bibitem{FG-2007}
E.~Fern{\'a}ndez-Cara and S.~Guerrero, \emph{Null controllability of the
  {B}urgers system with distributed controls}, Systems Control Lett.
  \textbf{56} (2007), no.~5, 366--372.

\bibitem{FI1996}
A.~V. Fursikov and O.~Yu. Imanuvilov, \emph{Controllability of evolution
  equations}, Seoul National University Research Institute of Mathematics
  Global Analysis Research Center, Seoul, 1996.

\bibitem{GG-2007}
O.~Glass and S.~Guerrero, \emph{On the uniform controllability of the {B}urgers
  equation}, SIAM J. Control Optim. \textbf{46} (2007), no.~4, 1211--1238.

\bibitem{GI-2007}
S.~Guerrero and O.~Yu. Imanuvilov, \emph{Remarks on global controllability for
  the {B}urgers equation with two control forces}, Ann. Inst. H. Poincar\'e
  Anal. Non Lin\'eaire \textbf{24} (2007), no.~6, 897--906.

\bibitem{IP-2009}
O.~Yu. Imanuvilov and J.-P. Puel, \emph{On global controllability of 2-{D}
  {B}urgers equation}, Discrete Contin. Dyn. Syst. \textbf{23} (2009), no.~1-2,
  299--313.

\bibitem{lr2002}
P.-G. Lemari{\'e}-Rieusset, \emph{Recent {D}evelopments in the
  {N}avier--{S}tokes {P}roblem}, Chapman \& Hall/CRC, Boca Raton, FL, 2002.

\bibitem{nersisyan-2011}
H.~Nersisyan, \emph{Controllability of the 3{D} compressible {E}uler system},
  Comm. Partial Differential Equations \textbf{36} (2011), no.~9, 1544--1564.

\bibitem{sarychev-2012}
A.~Sarychev, \emph{{Controllability of the cubic Schr\"odinger equation via a
  low-dimensional source term}}, Math. Control Relat. Fields \textbf{2} (2012),
  no.~3, 247--270.

\bibitem{shirikyan-cmp2006}
A.~Shirikyan, \emph{Approximate controllability of three-dimensional
  {N}avier-{S}tokes equations}, Comm. Math. Phys. \textbf{266} (2006), no.~1,
  123--151.

\bibitem{shirikyan-2014}
\bysame, \emph{Controllability of {N}avier--{S}tokes equations in ${R}^2$ and
  applications}, in preparation (2014).

\end{thebibliography}
\end{document}